\newtheorem{theorem}{Theorem}[section]
\newtheorem{proposition}[theorem]{Proposition}
\newtheorem{corollary}[theorem]{Corollary}
\newtheorem{lemma}[theorem]{Lemma}
\newtheorem{assumption}[theorem]{Assumption}
\theoremstyle{definition}
\newtheorem{definition}[theorem]{Definition}
\theoremstyle{remark}
\newtheorem{remark}[theorem]{Remark}
\numberwithin{equation}{section}
\begin{document}
\title[Bounded composition operators and stability of dynamical systems]{Bounded composition operators on functional quasi-Banach spaces and stability of dynamical systems}

\author{Isao Ishikawa}
\address{
Center for Data Science, Ehime University
\\
3, Bunkyo-cho, Matsuyama, Ehime, 790-8577, Japan.}
\email{ishikawa.isao.zx@ehime-u.ac.jp}

\subjclass[2020]{Primary 47B33, Secondary 37C05}
\keywords{Boundedness, Composition operators, Dynamical systems, Quasi-Banach spaces, Reproducing kernel Hilbert spaces, Koopman opertors}

\date{}

\begin{abstract}
In this paper, we investigate the boundedness of composition operators defined on a quasi-Banach space continuously included in the space of smooth functions on a manifold.
We prove that the boundedness of a composition operator strongly restricts the behavior of the original map, and it provides an effective method to investigate the properties of composition operators using the theory of dynamical system.
Consequently, we prove that only affine maps can induce bounded composition operators on any quasi-Banach space continuously included in the space of entire functions of one variable if the function space contains a nonconstant function.
We also prove that any polynomial automorphisms except affine transforms cannot induce bounded composition operators on a quasi-Banach space composed of entire functions in the two-dimensional complex affine space under several mild conditions.

\end{abstract}

\maketitle
\section{Introduction}
In this paper, we investigate the boundedness of composition operators defined on a quasi-Banach space continuously included in the space of smooth functions on a smooth manifold.
We connect the boundedness of composition operators and the dynamics of the original maps.
We utilize a powerful theory developed in holomorphic dynamics, in contrast to existing studies that heavily depend on the explicit structure of their function spaces.
Consequently, we prove that only affine maps can induce bounded composition operators on any quasi-Banach space continuously included in the space of entire functions on $\mathbb{C}$, in particular, on any reproducing kernel Hilbert space (RKHS) composed of entire functions on $\mathbb{C}$.

To be precise, we introduce several notions and state our main results.
We have included other basic notations used in this paper at the end of this section.
Let $X$ be a smooth manifold of dimension $d$.
Let $\mathcal{E}(X)$ be the space of smooth functions on $X$.
We equip $\mathcal{E}(X)$ with the weak Whitney topology.
A {\em quasi-Banach} space is a complete Hausdorff topological vector space $V$, with a bounded neighborhood of $0$.
We always assume that $V$ is a subspace of $\mathcal{E}(X)$ and that the inclusion $\iota \colon V \hookrightarrow \mathcal{E}(X)$ is continuous.
A typical example of $V$ is an RKHS (see Section \ref{sec: function spaces}, or \cite{SS} for more detail), Hardy spaces, and Bergman spaces (see, for example, \cite{DS04}).

Let $f\colon X \rightarrow X$ be a smooth map. 
The {\em composition operator} is a linear operator $C_f$ on $V$ defined on $\{ h \in V : h \circ f \in V\}$ such that $C_fh := h \circ f$.
There have been numerous studies on the properties of composition operators, especially where $X$ is a complex manifold.
For classical function spaces such as the Bergman space, \cite{Zh07} and \cite{CM95} are standard references.
In the case of $X = \mathbb{C}^d$, many works exist on the characterization of the boundedness of composition operators on quasi-Banach spaces. Several results show that boundedness implies the affineness of the original holomorphic map \cite{CMS, SS17, DKL17, CCG, AA17, KD20, HHK13, HBH14, IIS20, Lev12}.
These results suggest that a broad class of function spaces of entire functions have the same property.
As in Theorem \ref{mainthm2} below, we solve this problem in the one-dimensional case.
The composition operator is also important in applied mathematics, for example, in signal processing \cite{CC93, Azi02, ACCM} as the time-warping and in machine learning \cite{HIIK, IFIHK, Kawahara} as the Koopman operator.

Let 
\begin{align*}
 \mathcal{D}(\mathbb{R}^d) := \bigoplus_{n_1, \dots, n_d = 0}^\infty \mathbb{C} \partial_{x_1}^{n_1}\cdots \partial_{x_d}^{n_d}
\end{align*}
be the space of differential operators on $\mathcal{E}(\mathbb{R}^d)$ and for each $n \ge 0$, we define a finite-dimensional subspace of $\mathcal{D}(\mathbb{R}^d)$ by
\begin{align*}
 \mathcal{D}_n(\mathbb{R}^d) := \bigoplus_{n_1+ \dots + n_d \le n} \mathbb{C} \partial_{x_1}^{n_1}\cdots \partial_{x_d}^{n_d}.
\end{align*}
Let $p \in X$ and fix a local coordinate $\phi$ from an open neighborhood of $p$ onto an open subset $\mathbb{R}^d$.
Then, we define an injective linear map $\delta_{p,\phi}\colon \mathcal{D}(\mathbb{R}^d) \rightarrow \mathcal{E}(X)'$ by
\begin{align*}
\delta_{p,\phi}(D)(h):= D(h\circ \phi^{-1})(\phi(p))
\end{align*}
for $D \in \mathcal{D}(\mathbb{R}^d)$ and $h \in \mathcal{E}(X)$, where $()'$ stands for the dual space with the strong topology.
The set $\delta_{p, \phi}(\mathcal{D}(\mathbb{R}^d))$ is independent of the choice of the local coordinate $\phi$, and we define subspaces of $\mathcal{E}(X)'$ by 
\begin{align*}
  \mathcal{D}(X)_p  & := \delta_{p,\phi}(\mathcal{D}(\mathbb{R}^d)), \\
  \mathcal{D}_n(X)_p & := \delta_{p,\phi}(\mathcal{D}_n(\mathbb{R}^d)). 
\end{align*}
As we below show (Lemma \ref{lem: explicit push on a fixed point}), the dual operator $f_*:={f^*}':\mathcal{E}(X)' \rightarrow \mathcal{E}(X)'$ of $f^*:h \mapsto h\circ f$ preserves the finite  dimensional subspace $\mathcal{D}_n(X)_p$ for any $n\ge0$ if $p$ is a fixed point of $f$.
Thus, $f_*$ induces the linear map ${\rm gr}_{f_*}^n$ on $\mathcal{D}_n(X)_p/\mathcal{D}_{n-1}(X)_p$.
For $n \ge 0$, we define a surjective linear map induced by $\iota'$:
\begin{align*}
  \kappa^n_{p}\colon \mathcal{D}_n(X)_p/\mathcal{D}_{n-1}(X)_p \longrightarrow \iota'\left(\mathcal{D}_n(X)_p\right)/\iota'\left(\mathcal{D}_{n-1}(X)_p\right), 
\end{align*}
Then, we have the following theorem:
\begin{theorem}
\label{mainthm}
\label{thm: stability of dynamics}
Let $p \in X$ be a periodic point of $f$ such that $f^r(p) = p$ for some $r \ge 1 $.
Assume the following two conditions:  
\begin{enumerate}
  \item $C_f$ is a bounded linear operator on $V$, and
  \item for infinitely many $n\ge 0$, ${\rm Ker}({\kappa}^n_{p}) \subset {\rm Ker}({\rm gr}^n_{f_*^r})$. \label{condition 2}
\end{enumerate}
  Then, any eigenvalue $\alpha$ of ${\rm d}f^r_p: T_p(X) \rightarrow T_p(X)$ satisfies the inequality $|\alpha|\le 1$.
Here, $T_p(X)$ is the tangent space of $X$ at $p$.
\end{theorem}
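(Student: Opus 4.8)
The plan is to pass to the transpose operator on $V'$, where the boundedness of $C_f$ confines the data of ${\rm d}f^r_p$ to a finite-dimensional invariant subspace, and then let the order $n$ tend to infinity. First, since $C_f$ is bounded, so is $C_f^r=C_{f^r}$; the point $p$ is a fixed point of $f^r$, one has $f_*^r=(f^r)_*$, and ${\rm d}(f^r)_p={\rm d}f^r_p$. Writing $g:=f^r$, it therefore suffices to prove: if $g(p)=p$, $C_g$ is bounded, and ${\rm Ker}(\kappa^n_p)\subset{\rm Ker}({\rm gr}^n_{g_*})$ for infinitely many $n$, then every eigenvalue of ${\rm d}g_p$ has modulus at most $1$.

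Next I would set up the dual picture. The transpose $C_g'\colon V'\to V'$ is bounded with $\|C_g'\|\le\|C_g\|$, and from $\iota\circ C_g=g^*\circ\iota$ we get $C_g'\circ\iota'=\iota'\circ g_*$ on $\mathcal{E}(X)'$. By Lemma~\ref{lem: explicit push on a fixed point}, $g_*$ preserves the finite-dimensional spaces $\mathcal{D}_{n-1}(X)_p\subset\mathcal{D}_n(X)_p$, so, setting $W_m:=\iota'(\mathcal{D}_m(X)_p)\subset V'$, the operator $C_g'$ preserves $W_{n-1}\subset W_n$. Let $T_n$ be the endomorphism of $W_n/W_{n-1}$ induced by $C_g'$. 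The identity $C_g'\circ\iota'=\iota'\circ g_*$ descends to $\kappa^n_p\circ{\rm gr}^n_{g_*}=T_n\circ\kappa^n_p$; in particular ${\rm Ker}(\kappa^n_p)$ is ${\rm gr}^n_{g_*}$-invariant.

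For the core step, one identifies $\mathcal{D}_n(X)_p/\mathcal{D}_{n-1}(X)_p$ with the space of principal symbols of order $n$ at $p$. Since the principal symbol of a product of differential operators is the product of the symbols, and since ${\rm gr}^1_{g_*}$ has the same eigenvalues as ${\rm d}g_p$ (again by Lemma~\ref{lem: explicit push on a fixed point}), ${\rm gr}^n_{g_*}$ is, up to isomorphism, the $n$-th symmetric power ${\rm Sym}^n({\rm d}g_p)$; hence its eigenvalues are exactly the products of $n$ (not necessarily distinct) eigenvalues of ${\rm d}g_p$, and in particular $\alpha^n$ is an eigenvalue of ${\rm gr}^n_{g_*}$ for every eigenvalue $\alpha$ of ${\rm d}g_p$. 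Now fix $n$ in the infinite index set of condition~(2) and let $\alpha\neq0$ be an eigenvalue of ${\rm d}g_p$. Because ${\rm gr}^n_{g_*}$ vanishes on ${\rm Ker}(\kappa^n_p)$, a block-triangular computation gives $\det(\lambda I-{\rm gr}^n_{g_*})=\lambda^{\dim{\rm Ker}(\kappa^n_p)}\det(\lambda I-T_n)$, so the nonzero eigenvalue $\alpha^n$ of ${\rm gr}^n_{g_*}$ is an eigenvalue of $T_n$, and hence of $C_g'|_{W_n}$ (which induces $T_n$ on $W_n/W_{n-1}$). Taking a corresponding eigenvector $\ell\in W_n\setminus\{0\}$ and iterating yields $|\alpha|^{nk}\|\ell\|=\|(C_g')^k\ell\|\le\|C_g'\|^k\|\ell\|\le\|C_g\|^k\|\ell\|$ for all $k\ge1$, so $|\alpha|^n\le\|C_g\|$. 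Since this holds for infinitely many $n$, letting $n\to\infty$ gives $|\alpha|\le1$; the eigenvalue $0$ is trivial. As ${\rm d}g_p={\rm d}f^r_p$, the theorem follows.

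The main obstacle I anticipate is twofold: executing cleanly the identification ${\rm gr}^n_{g_*}\cong{\rm Sym}^n({\rm d}f^r_p)$, which is what makes $\alpha^n$ appear as an honest eigenvalue, and recognizing that condition~(2) is exactly what forces \emph{every nonzero} eigenvalue of this geometric operator to descend through $\kappa^n_p$ and so become spectral data of the bounded operator $C_f^r$. Granting these, the quasi-Banach spectral bound and the passage to the limit $n\to\infty$ are routine.
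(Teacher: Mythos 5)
Your proposal is correct and follows essentially the same route as the paper: dualize to $V'$, use the filtration $\iota'(\mathcal{D}_n(X)_p)$ together with the symmetric-power description of ${\rm gr}^n_{f_*^r}$ (Corollary \ref{cor: preserving property}) to see that $\alpha^n$ survives as spectral data of the bounded operator, and let $n\to\infty$ along the set in condition \eqref{condition 2}. The only (harmless) difference is that you lift $\alpha^n$ via the characteristic-polynomial factorization to an eigenvector in $W_n\subset V'$ and use the dual norm, whereas the paper works directly with the eigenvector $\kappa^n_p(v_n)$ in the quotient $W_n/W_{n-1}$ equipped with the quotient quasi-norm.
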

We note that for any holomorphic $f$ and all but finitely many $p \in X$, the condition \eqref{condition 2} holds when, for example, the space $V$ is the Fock type space \cite{CMS}, $V$ is an RKHS considered in \cite{DKL17, SS17, KD20,IIS20} (we provide more general function spaces covering them in Section \ref{sec: function spaces}), and when $V$ is an infinitely dimensional quasi-Banach space and $X$ is a one-dimensional complex manifold.
The eigenvalue $\alpha$ of the Jacobian at a periodic point is a well-studied object in the theory of dynamical systems.
A periodic point violating the inequality in Theorem \ref{mainthm} is called a saddle or repelling periodic point.
Thus, we can say that any dynamics inducing a bounded composition operator on a certain quasi-Banach space $V \subset \mathcal{E}(X)$ cannot have any saddle or repelling periodic points; namely, the dynamics behaves rather tamely on $X$.

The proof is based on the existence of the ascending filtration $\{\iota'(\mathcal{D}_n(X)_p)\}_{n\ge0}$ of finite-dimensional spaces in $V'$.
It enables us to extract the information on the eigenvalues of the Jacobian via the graded module structure and the quasi-norm on $V$.

When we consider the case of $X=\mathbb{C}$, holomorphic dynamics on $\mathbb{C}$ except affine maps chaotically behave and have infinitely many repelling points according to the theory of holomorphic dynamics in one variable \cite{Bea91, Sch10}.
As a result, we have the following result:
\begin{theorem} \label{mainthm2}\label{thm: affineness in C, banach}
Let $V \subset \mathcal{A}(\mathbb{C})$ be a quasi-Banach space, and the inclusion is continuous.
Assume $V$ contains a nonconstant entire function.
Then, if a holomorphic map $f\colon \mathbb{C} \to \mathbb{C}$ induces a bounded linear operator $C_f$ on $V$, then $f(z) = az + b$ for some $a, b \in \mathbb{C}$.
In addition, we have $|a| \le 1$ if $V$ is infinite-dimensional.
\end{theorem}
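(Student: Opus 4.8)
The plan is to derive both assertions from Theorem \ref{mainthm} applied to well-chosen periodic points, after first reducing to the situation where that theorem's hypotheses are automatic. First I would dispose of the structural claim that $f$ is affine. Suppose $f$ is not affine. Since $f$ is entire, it is then either a polynomial of degree $\ge 2$ or transcendental; in either case the classical theory of iteration of entire functions on $\mathbb{C}$ (Beardon \cite{Bea91}, Schleicher \cite{Sch10}) guarantees that $f$ has infinitely many repelling periodic points — indeed repelling cycles are dense in the Julia set, which is nonempty. Fix one such point $p$ with $f^r(p)=p$ and $|\,(f^r)'(p)\,| > 1$. The idea is to contradict the eigenvalue bound $|\alpha|\le 1$ of Theorem \ref{mainthm}, so I must check its two hypotheses. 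Hypothesis (1), boundedness of $C_f$, is assumed. For hypothesis (2) I would invoke the remark following Theorem \ref{mainthm}: when $X=\mathbb{C}$ is a one-dimensional complex manifold and $V$ is infinite-dimensional, the condition $\mathrm{Ker}(\kappa^n_p)\subset \mathrm{Ker}(\mathrm{gr}^n_{f_*^r})$ holds for infinitely many $n$. The point is that in one complex variable $\mathcal{D}_n(\mathbb{C})_p/\mathcal{D}_{n-1}(\mathbb{C})_p$ is one-dimensional (spanned by the class of $\partial_z^n$), so $\kappa^n_p$ is either an isomorphism or zero; if $\iota'$ stays injective on $\mathcal{D}_n(\mathbb{C})_p$ for infinitely many $n$ — which happens precisely because $V$ is infinite-dimensional and the filtration $\{\iota'(\mathcal{D}_n(\mathbb{C})_p)\}$ must be strictly increasing infinitely often — then $\mathrm{Ker}(\kappa^n_p)=0$ for those $n$ and hypothesis (2) is trivially satisfied. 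I would spell out this injectivity argument carefully: if $\iota'$ killed $\mathcal{D}_N(\mathbb{C})_p/\mathcal{D}_{N-1}(\mathbb{C})_p$ for all large $N$, then $\iota'(\mathcal{D}(\mathbb{C})_p)$ would be finite-dimensional; but an entire function vanishing to infinite order at $p$ together with all its derivatives information at $p$ forces — via Taylor expansion and the density of evaluation-type functionals separating points in any $V$ containing a nonconstant entire function — that $V$ is finite-dimensional, contrary to hypothesis.

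With hypotheses (1) and (2) verified, Theorem \ref{mainthm} applies to the periodic point $p$ and gives $|\alpha|\le 1$ for every eigenvalue $\alpha$ of $\mathrm{d}f^r_p$. But $X=\mathbb{C}$ is one-dimensional, so $\mathrm{d}f^r_p$ is the scalar $(f^r)'(p)$, whose absolute value exceeds $1$ by the choice of $p$ as a repelling periodic point. This contradiction shows $f$ must be affine, say $f(z)=az+b$.

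It remains to treat the second assertion: if $V$ is infinite-dimensional then $|a|\le 1$. Here I distinguish two cases. If $a=1$ the bound is immediate, so assume $a\ne 1$; then $f(z)=az+b$ has the unique fixed point $p = b/(1-a)$, a periodic point with $r=1$ and $\mathrm{d}f_p = a$. Exactly as above, hypothesis (2) holds because $V$ is infinite-dimensional and $X$ is one-dimensional, and hypothesis (1) is assumed, so Theorem \ref{mainthm} yields $|a| = |\alpha| \le 1$, as desired. (If $a=0$, $f$ is constant and the bound holds trivially.) I expect the main obstacle to be the careful justification of hypothesis (2) in the infinite-dimensional one-variable setting — i.e.\ proving that $\iota'$ is injective on $\mathcal{D}_n(\mathbb{C})_p$ for infinitely many $n$, equivalently that the jets of arbitrarily high order at $p$ are detected by $V'$. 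This hinges on showing that a nonconstant entire function in $V$ cannot be "invisible" to all high-order jet functionals, which I would argue by combining the continuity of $\iota$, the fact that on $\mathcal{A}(\mathbb{C})$ the Taylor coefficients at $p$ are continuous linear functionals, and a dimension count; everything else is a direct citation of Theorem \ref{mainthm} and of one-dimensional holomorphic dynamics.
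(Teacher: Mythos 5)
There is a genuine gap: your argument proves the affineness claim only when $V$ is infinite-dimensional, whereas the theorem assumes merely that $V$ contains a nonconstant entire function. Your verification of hypothesis (2) of Theorem \ref{thm: stability of dynamics complex case} at a repelling periodic point $p$ really does require infinite-dimensionality: in one complex variable $\mathrm{gr}^{n,\mathrm{hol}}_{f_*^r}$ is multiplication by $\alpha^n\neq 0$ on a one-dimensional space, so condition (2) forces $\kappa^{n,\mathrm{hol}}_p$ to be injective for infinitely many $n$, which by Lemma \ref{lem: density and finite-dimensional criterion} is \emph{equivalent} to $\dim V=\infty$. Your attempt to close this by claiming that a $V$ containing a nonconstant entire function must be infinite-dimensional (``contrary to hypothesis'') is false: $V=\mathrm{span}_{\mathbb{C}}\{1,z\}$ with any Hilbert norm is a two-dimensional quasi-Banach space continuously included in $\mathcal{A}(\mathbb{C})$ containing a nonconstant function. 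So for finite-dimensional $V$ the eigenvalue machinery simply does not apply, and one must rule out, for instance, a non-affine $f$ and a nonconstant entire $h$ with $h\circ f=\lambda h$ (which would make $C_f$ bounded on $\mathrm{span}\{1,h\}$); this is not automatic and is exactly the case your proposal leaves open.

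The paper closes precisely this case by a separate dynamical argument: after using Theorem \ref{thm: stability of dynamics complex case} and Corollary \ref{cor: condition 2 riemann surface} to reduce to finite-dimensional $V$, it takes either a nonconstant eigenfunction $C_f[h]=\lambda h$ — and shows, using that the Julia set $J_f$ is perfect, lies in the closure of the periodic points, and equals the closure of the backward orbit $\cup_n f^{-n}(z)$ of a suitable point, that $\lambda$ is a root of unity and then that $h$ is constant on $J_f$, hence constant, a contradiction — or, if no nonconstant eigenfunction exists, a nonconstant $h$ with $C_f[h]=h+1$, which is contradicted by evaluating at a periodic point. Your treatment of the infinite-dimensional case and of the final assertion $|a|\le 1$ (fixed point of $z\mapsto az+b$ when $a\neq 1$, trivial otherwise) does match the paper's route, but as written the proof of the first assertion is incomplete without the finite-dimensional argument.
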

As mentioned above, these sorts of results are known in various existing works, and we prove that it is always true in the one-dimensional case.

The problem becomes considerably more complex for higher-dimensional cases than the one-dimensional one. 
Actually, we can easily construct a counter-example if we impose no condition on $V$.
However, let
\[
\mathcal{G}_d(V) := \left\{ A \in {\rm GL}_d(\mathbb{C}) : C_{A(\cdot) + b}\text{ is bounded on $V$ for some $b\in\mathbb{C}^d$}\right\}.
\]
Then, we have the following result in the two-dimensional case:
\begin{theorem} \label{mainthm3}\label{thm: two dim case}
Let $V \subset \mathcal{A}(\mathbb{C}^2)$ be a quasi-Banach space, and the inclusion map is continuous.
Assume the following two conditions:
\begin{enumerate}
  \item $\kappa_p^{n, \rm hol}$ (see Section \ref{subsec: stability cpx mfd} for the definition) is injective for all but finitely many $p \in \mathbb{C}^2$ and infinitely many $n \ge 0$, and
  \item $\langle \mathcal{G}_2(V) \rangle_\mathbb{C} = {\rm M}_2(\mathbb{C})$.
\end{enumerate}
    Then, if a polynomial automorphism $f\colon \mathbb{C}^2 \to \mathbb{C}^2$ induces a bounded linear operator $C_f$ on $V$, then $f(z) = Az + b$ for some $A \in {\rm GL}_2(\mathbb{C})$ and $ b \in \mathbb{C}$.
\end{theorem}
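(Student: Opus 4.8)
The plan is to run the strategy of the proof of Theorem~\ref{thm: affineness in C, banach}, in which one-dimensional holomorphic dynamics (a non-affine entire self-map of $\mathbb{C}$ has repelling cycles) is fed into Theorem~\ref{thm: stability of dynamics}; here the dynamical input is the structure theory of polynomial automorphisms of $\mathbb{C}^2$. First I would isolate the operative consequence of Theorem~\ref{thm: stability of dynamics}: \emph{if $g$ is a polynomial automorphism of $\mathbb{C}^2$ with $C_g$ bounded on $V$, then $g$ has no periodic point, outside the finite exceptional set appearing in Theorem~\ref{thm: two dim case}, at which ${\rm d}g^r_p$ has an eigenvalue of modulus $>1$.} Indeed, at a periodic point $p$ (with $g^r(p)=p$) outside that set, injectivity of $\kappa_p^{n,{\rm hol}}$ for infinitely many $n$ makes the analogue of condition~\eqref{condition 2} of Theorem~\ref{thm: stability of dynamics} hold vacuously, so the holomorphic counterpart of that theorem bounds every eigenvalue of ${\rm d}g^r_p$ in modulus by $1$. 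Since a H\'enon-type automorphism of $\mathbb{C}^2$ possesses infinitely many saddle periodic orbits, only finitely many of which can meet the exceptional set, no H\'enon-type $f$ can induce a bounded composition operator on $V$.

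If $f$ is affine we are done, so assume $\deg f\ge 2$. By the structure theory of ${\rm Aut}(\mathbb{C}^2)$ (Jung--van der Kulk, Friedland--Milnor), either $f$ is of H\'enon type --- excluded by the first step --- or $f$ has dynamical degree one, i.e.\ is conjugate, by a polynomial automorphism, into the affine group or into the elementary (triangular) group. In the latter case I argue by contradiction, using the spanning hypothesis $\langle\mathcal{G}_2(V)\rangle_\mathbb{C}={\rm M}_2(\mathbb{C})$ together with the elementary remark that the maps inducing bounded composition operators on $V$ are closed under composition, since $C_{F\circ G}=C_G\circ C_F$; thus for every $A\in\mathcal{G}_2(V)$, after fixing $b$ with $C_{A(\cdot)+b}$ bounded, any word built from $f$ and the affine automorphism $g:=A(\cdot)+b$ again induces a bounded composition operator. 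Using the amalgamated-product presentation of ${\rm Aut}(\mathbb{C}^2)$ as the free product of the affine and elementary groups over their intersection, one sees that a dynamical-degree-one $f$ which is not affine has a reduced normal form containing an elementary syllable while cyclically reducing to a single syllable; a short computation with reduced words then produces a word $w$ in $f$ and a \emph{single} auxiliary affine automorphism $g$ whose cyclically reduced length is $\ge 2$ --- so that $w$ is of H\'enon type --- as soon as the linear part of $g$ avoids a fixed proper linear subspace $W\subsetneq{\rm M}_2(\mathbb{C})$ (a conjugate of the upper-triangular matrices). Since ${\rm M}_2(\mathbb{C})\not\subseteq W$, the spanning hypothesis guarantees some $A\in\mathcal{G}_2(V)$ outside $W$; for such $A$, the word $w$ is a H\'enon-type automorphism with $C_w$ bounded, contradicting the first step. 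Hence $\deg f=1$ and $f$ is affine.

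I expect the main difficulty to be exactly this last reduction. One must verify, uniformly over \emph{all} dynamical-degree-one non-affine automorphisms $f$ --- including those that are only non-linearly conjugate to an affine or an elementary map --- that some word in $f$ and one affine automorphism is of H\'enon type, and, crucially, that the ``bad'' affine maps form a proper \emph{linear} subspace of linear parts (not merely a proper subvariety), since it is linearity that makes the spanning hypothesis applicable. The Bass--Serre-tree picture underlying the amalgamated-product structure --- a reduced word of cyclically reduced length $\ge 2$ acts hyperbolically on the tree and is therefore dynamically non-trivial --- makes this plausible, but carrying out the normal-form bookkeeping and disposing of the degenerate configurations (for instance when some iterate $f^k$ is already affine, or when the translation parts conspire) is where the real work lies. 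The remaining ingredient, the abundance of saddle periodic orbits of H\'enon-type automorphisms, is classical (Friedland--Milnor; Bedford--Smillie) and plays precisely the role that the abundance of repelling cycles plays in Theorem~\ref{thm: affineness in C, banach}.
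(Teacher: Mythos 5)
Your proposal follows essentially the same route as the paper: it excludes compositions of generalized H\'enon maps via their (infinitely many) saddle periodic points combined with Theorem \ref{thm: stability of dynamics complex case}, and then, using the Friedland--Milnor reduced-word structure of ${\rm Aut}(\mathbb{C}^2)$, composes a non-affine, non-H\'enon $f$ with a bounded affine map supplied by the hypothesis $\langle\mathcal{G}_2(V)\rangle_\mathbb{C}={\rm M}_2(\mathbb{C})$ so as to reach a H\'enon-type word, contradicting boundedness via $C_{h\circ f}=C_f C_h$. The only real divergence is that where you observe directly that the ``bad'' linear parts form a conjugate of the upper-triangular matrices, hence a proper linear subspace that the spanning hypothesis lets you avoid, the paper routes this through Lemma \ref{lem: nazo lemma}; and the reduced-word bookkeeping you defer as ``the real work'' is precisely the paper's case analysis (parity of the word length, last letter elementary versus linear, and whether $g_1g_n$ is upper triangular), which goes through as you anticipate.
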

Here, $\langle S \rangle_\mathbb{C}$ denotes the vector space generated by $S$ over $\mathbb{C}$.
The above two conditions hold in a certain large class of function spaces, and in particular, the RKHSs treated in \cite{SS17, DKL17, IIS20} satisfy them (Section \ref{sec: function spaces}); although it is worth mentioning that these studies have already proved that not only polynomial automorphisms but holomorphic maps except affine maps can also not induce a bounded composition operators.
The theory of holomorphic dynamics is effective machinery, and it allows us to reduce the problem to proving the existence of infinitely many repelling or saddle periodic points for a suitable class of holomorphic dynamical systems (see \cite[Question 2.16]{10.2307/43736735}).
For the details, see Section \ref{sec: Cd}.

The rest of this paper is structured as follows:
In Section \ref{sec: main theorem}, we introduce notions and several preliminary lemmas and provide the proof of Theorem \ref{mainthm}.
We also obtain similar results in complex manifolds and briefly summarize their statements.
In Section \ref{sec: function spaces}, we provide several explicit examples of quasi-Banach spaces continuously included in $\mathcal{E}(X)$, which satisfies the condition \eqref{condition 2} in Theorem \ref{mainthm}.
We also illustrate that our framework covers many previous results.
In Section \ref{sec: Cd}, we prove Theorems \ref{mainthm2} and \ref{mainthm3}.

\section*{Basic notation}
We denote the set of the real (resp. complex) values by $\mathbb{R}$ (resp. $\mathbb{C}$).
We denote the set of non-negative integers by $\mathbb{Z}_{\ge 0}$.
For any subset of $S \subset \mathbb{R}$, we denote by $S_{>0}$ (resp. $S_{\ge 0}$) the set of positive (resp. non-negative) elements of $S$.
We denote by $\partial_{x_j}$ (resp. $\partial_{z_j}$) the partial derivative with respect to the $j$-th variable of a differentiable (resp. holomorphic) function on an open set of $\mathbb{R}^d$ (resp. $\mathbb{C}^d$).
We denote by ${\rm GL}_d(\mathbb{C})$ (resp. ${\rm M}_d(\mathbb{C})$) the set of regular matrices (resp. matrices) of size $d$.
For any subset $S\subset V$ of a complex linear space $V$, we denote by $\langle S \rangle_\mathbb{C}$ the linear subspace generated by $S$.

We sometimes use the multi-index notation.
For example, for $(\alpha_1,\dots, \alpha_d) \in \mathbb{Z}_{\ge 0}$, $(z_1, \dots, z_d)$ and $(\partial_{x_1},\dots, \partial_{x_d})$, we write $z^\alpha := z_1^{\alpha_1}\dots z_d^{\alpha_d}$ and $\partial_{x}^\alpha := \partial_{x_1}^{\alpha_1}\dots\partial_{x_d}^{\alpha_d}$.

For a smooth (complex) manifold $X$, we denote by $\mathcal{E}(X)$ (resp. $\mathcal{A}(X)$) the space of $\mathbb{C}$-valued $C^\infty$-class (resp. holomorphic) functions on $X$.

For a topological linear space $V$, we denote the strong dual by $V'$.
For a bounded linear map $L: V_1 \rightarrow V_2$ between topological linear spaces $V_1$ and $V_2$, we denote by $L'$ the dual linear operator $L': V_2' \rightarrow V_1'$.

For a vector space $V$ with an ascending filtration, which is an ascending family of subspaces $\{V_n \}_{n\ge -1}$, we define ${\rm gr}^n[V]:= V_n/V_{n-1}$ and ${\rm gr}[V] := \oplus_{n \ge 0} {\rm gr}^n[V]$.
For a linear map $A: V \rightarrow W$ such that there exists families of ascending subspaces $\{V_n\}_{n\ge -1}$ (resp. $\{W_n\}_{n\ge -1}$) of $V$ (resp. $W$) satisfying $A(V_n) \subset W_n$, we denote by ${\rm gr}^n_A\colon V_n/V_{n-1} \rightarrow W_n/W_{n-1}$ the induced linear map.
We define ${\rm gr}_A:= \oplus_n {\rm gr}^n_A\colon \oplus_{n\ge 0} V_n/V_{n-1} \rightarrow \oplus_{n \ge 0} W_n/W_{n-1}$

\section{Boundedness of composition operators and the stability of dynamical systems} \label{sec: main theorem}
In this section, we prove that the boundedness of a composition operator strongly limits the behavior of a dynamical system on a smooth manifold.
In Subsection \ref{subsec: stability smooth mfd}, we describe the main result in a general smooth manifold, and in Subsection \ref{subsec: stability cpx mfd}, we summarize corresponding results in the complex case.

\subsection{A stability of dynamical systems on a smooth manifold with bounded composition operators} \label{subsec: stability smooth mfd}
Let $X$ be a smooth manifold of dimension $d$ and $f\colon X \rightarrow X$ be a smooth map.
Let $V$ be a quasi-Banach space, a complete Hausdorff topological vector space with respect to a quasi-norm $\|\cdot\|_V$.
Here, the quasi-norm is a map $\|\cdot\|_V\colon V \rightarrow \mathbb{R}_{\ge 0}$ satisfying the following three conditions: (1) $\|av\|_V = |a|\cdot \|v\|_V$, (2) there exists $K \ge 1$ such that $\|v + w\|_V \le K(\|v\|_V + \|w\|_V)$, and (3) $v=0$ if $\|v\|_V=0$, where $a \in \mathbb{C}$ and $v, w \in V$.
Although a quasi-Banach space is not necessarily locally convex, bounded linear operators on a quasi-Banach space share several properties of those well-known in a Banach space.
A quasi-Banach space is also characterized as a complete Hausdorff topological vector space with a bounded neighborhood of $0$ (See \cite{kalton_peck_roberts_1984}).
In this paper, we always assume that $V \subset \mathcal{E}(X)$ and that the inclusion $V \hookrightarrow \mathcal{E}(X)$ is a continuous linear map.
A typical example is an RKHS.
We include some details of RKHS and other examples in Section \ref{sec: function spaces}.

For a smooth map $f: X \to X$, the associated composition operator on $V$ is the linear operator defined as $C_f: \{ h \in V : h \circ f \in V \} \to V$, $C_f[h]:=h \circ f$.  %In this paper, we assume that the domain of $C_f$ satisfies $\{h \in V : h \circ f \in V \}=V$ when we say that $C_f$ is a bounded linear operator on $V$ (we exlclude the case where $\{h \in V: h \circ f \in V\} = \{0\}$, namely $C_f$ is trivially bounded).
This paper assumes that the domain of $C_f$ satisfies $\{h \in V : h \circ f \in V \}=V$.
Although $C_f$ is automatically bounded under this assumption by the Closed Graph Theorem as $C_f$ has a closed graph, we usually specify that $C_f$ is a bounded linear operator on $V$ in this paper to avoid confusion.

Let 
\begin{align*}
 \mathcal{D}(\mathbb{R}^d) := \bigoplus_{n_1, \dots, n_d = 0}^\infty \mathbb{C} \partial_{x_1}^{n_1}\cdots \partial_{x_d}^{n_d}
\end{align*}
be the space of differential operators on $\mathcal{E}(\mathbb{R}^d)$ and for each $n \ge 0$, we define
\begin{align*}
 \mathcal{D}_n(\mathbb{R}^d) := \bigoplus_{n_1+ \dots + n_d \le n} \mathbb{C} \partial_{x_1}^{n_1}\cdots \partial_{x_d}^{n_d}.
\end{align*}

We consider $\mathcal{E}(X)$ as a locally convex space via the following topology: 
Let $\{(U_i, \phi_i) \}_{i\in I}$ be a local coordinate system of $X$, namely $\{U_i\}_{i \in I}$ is an open covering of $X$ and $\phi_i$ is a diffeomorphism from $U_i$ into an open set of $\mathbb{R}^d$.
We take another local coordinate system $\{(V_j, \psi_j)\}_{j\in J}$ such that $\{V_j\}_{j \in J}$ is a refinement of $\{U_i \cap U_{i'}\}_{i,i' \in I}$, namely for any $j \in J$ there exists $i_j, i'_j \in I$ such that $V_j \subset U_{i_j} \cap U_{i_j'}$.
Then, we regard $\mathcal{E}(X)$ as a closed subspace of $\prod_{i \in I}\mathcal{E}(U_i)$ defined by the kernel of the following continuous linear map:
\begin{align}
\prod_{i \in I} \mathcal{E}(U_i)\longrightarrow \prod_{j \in J} \mathcal{E}(V_j); ~~(h_i)_{i\in I} \mapsto (h_{i_j}|_{V_j} - h_{i'_j}|_{V_j})_{i,j \in I}, \label{sheaf representation}
\end{align}
where we identify $\mathcal{E}(U_i)$ and $\mathcal{E}(V_j)$ with $\mathcal{E}(\phi_i(U_i))$ and $\mathcal{E}(\psi_j(V_j))$, respectively, and their topologies are defined by uniform convergence of any derivatives on compact sets (see \cite[p.26-27]{YoshidaFA}).
We equip $\mathcal{E}(X)$ with the relative topology.
As in the following lemma, this topology is independent of the choice of local coordinate systems:

\begin{lemma} \label{lem: faa di bruno}
Let $p \in \mathbb{R}^d$ and $U \subset \mathbb{R}^d$ be an open neighborhood of $p$.
Let $F: U \rightarrow \mathbb{R}^r$ be a smooth map and $h:U \rightarrow \mathbb{C}$ be a smooth function.
Then, for any $\partial_{x_{i_1}} \cdots \partial_{x_{i_n}} \in \mathcal{D}_n(\mathbb{R}^d)$ ($i_j \in \{1,\dots,d\}$), there exists a smooth map $\mathbf{D}_n$ from $U$ to $\mathcal{D}_{n-1}(\mathbb{R}^d)$ such that for all $p \in U$,
\begin{align}
  (\partial_{x_{i_1}} \cdots\partial_{x_{i_n}}) (h\circ F)(p) = \left[\bigg( \prod_{j=1}^n \partial_{x_{i_j}} F_p \cdot \bm{\partial}\bigg)h\right] (F(p)) + (\mathbf{D}_n(p)h)(F(p)).
\label{to prove formula dayo}
\end{align}
Here, we regard $\mathcal{D}_{n-1}(\mathbb{R}^d)$ as a finite product of $\mathbb{C}$'s as it is finite-dimensional, and 
\[ \partial_{x_j}F_p \cdot \bm{\partial} := \sum_{m=1}^r \frac{\partial F^m}{\partial x_j}(p)\partial_{x_m} \in \mathcal{D}_1(\mathbb{R}^d),\]
where we write $F=(F^1,\dots, F^r)$ for some $F^j \in \mathcal{E}(U)$ ($j=1,\dots,r$).
\end{lemma}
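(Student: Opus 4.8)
The plan is to prove \eqref{to prove formula dayo} by induction on $n$, treating the asserted identity at each stage not as an equality of numbers at the single point $p$ but as an equality of smooth functions of a varying base point; this is essential, because the passage from $n$ to $n+1$ amounts to differentiating the order-$n$ identity once more. Throughout I would use that a first-order operator $\partial_{x_j}F_q\cdot\bm{\partial}=\sum_{m=1}^r\frac{\partial F^m}{\partial x_j}(q)\partial_{x_m}$, once its coefficients are frozen at a point, is a constant-coefficient operator in the target variables, so that such operators commute with one another and with the partial derivatives $\partial_{x_m}$ acting on $h$.

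For $n=1$ the identity is just the chain rule, $\partial_{x_{i_1}}(h\circ F)(p)=\sum_{m=1}^r\frac{\partial F^m}{\partial x_{i_1}}(p)(\partial_{x_m}h)(F(p))=[(\partial_{x_{i_1}}F_p\cdot\bm{\partial})h](F(p))$, with $\mathbf{D}_1\equiv 0$. For the inductive step I would fix a degree-$(n+1)$ monomial $\partial_{x_{i_0}}\partial_{x_{i_1}}\cdots\partial_{x_{i_n}}$ and set $G_q:=\prod_{j=1}^n(\partial_{x_{i_j}}F_q\cdot\bm{\partial})$, a differential operator of order $\le n$ in the target variables whose coefficients are polynomials in the first derivatives of the $F^m$, hence smooth in $q$. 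The inductive hypothesis, read as an identity of functions of $q\in U$, is
\[
(\partial_{x_{i_1}}\cdots\partial_{x_{i_n}})(h\circ F)(q)=(G_q h)(F(q))+(\mathbf{D}_n(q)h)(F(q)),
\]
and I would apply $\partial_{x_{i_0}}$ to both sides and evaluate at $p$. The left side is the quantity sought. On the right, differentiation by the product and chain rules splits each summand according to whether $\partial_{x_{i_0}}$ falls on the evaluation point $F(q)$ or on the $q$-dependent coefficients. In the first summand, the term in which $\partial_{x_{i_0}}$ hits $F(q)$ produces $\sum_{m'=1}^r\frac{\partial F^{m'}}{\partial x_{i_0}}(p)(G_p\partial_{x_{m'}}h)(F(p))$, which by the commutation above equals $[(\partial_{x_{i_0}}F_p\cdot\bm{\partial})G_p h](F(p))=\big[\prod_{j=0}^n(\partial_{x_{i_j}}F_p\cdot\bm{\partial})h\big](F(p))$, the required leading term. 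Every other term — $\partial_{x_{i_0}}$ hitting the coefficients of $G_q$, and both terms arising from $\partial_{x_{i_0}}$ applied to $(\mathbf{D}_n(q)h)(F(q))$ — is, after evaluation at $p$, a differential operator applied to $h$ and evaluated at $F(p)$, with coefficients built by finitely many products and sums from partial derivatives of the $F^m$ of order $\le n+1$, hence smooth in $p$; collecting them yields $\mathbf{D}_{n+1}(p)$. Here one checks the orders: hitting the coefficients of $G_q$ introduces no new derivative on $h$ and leaves order $\le n$; the coefficient term from $\mathbf{D}_n$ stays at order $\le n-1$; and the evaluation-point term from $\mathbf{D}_n$ raises order $\le n-1$ to $\le n$. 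Thus $\mathbf{D}_{n+1}(p)\in\mathcal{D}_n(\mathbb{R}^d)$, completing the induction.

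There is no genuine obstacle here; the whole content is bookkeeping, and the two points to get right are exactly the two just used: (i) that the deepest term, in which every $\partial_{x_{i_j}}$ acts through $F$ on the argument of $h$ and never differentiates a coefficient, reassembles into the iterated product $\prod_{j=0}^n(\partial_{x_{i_j}}F_p\cdot\bm{\partial})$ applied to $h$ — which is precisely where the commutation of frozen first-order operators enters; and (ii) the order arithmetic, namely that differentiating a remainder of order $\le n-1$ once more produces order $\le n$, which is exactly the bound $(n+1)-1$ needed to close the induction. Smoothness of $\mathbf{D}_n$ in the base point then requires no separate argument, since at each stage its coefficients are polynomial expressions in finitely many partial derivatives of the components $F^m$.
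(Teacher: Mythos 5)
Your proof is correct and follows essentially the same route as the paper: induction on the order, viewing the order-$n$ identity as an identity in the varying base point and differentiating it once more in the new direction (the paper implements this extra differentiation via the line $p_t=p+te_{i_1}$ and ${\rm d}/{\rm d}t|_{t=0}$, which is the same as your applying $\partial_{x_{i_0}}$ to the identity in $q$). Your identification of the leading term via commutation of the frozen-coefficient operators and your order bookkeeping for the remainder match the paper's construction of $\mathbf{D}_k$.
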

\begin{proof}
Although it can be shown by direct computation using the multivariate version of the Fa\`a di Bruno formula (see \cite[Theorem 2.1]{10.2307/2155187}), we here provide the proof using induction:

In the case of $n=1$, it immediately follows from the chain rule and we have $\mathbf{D}_1 = 0$.
In the case of $n=k >1$.
Put $D=\partial_{x_{i_2}} \cdots \partial_{x_{i_k}}$.
Fix $p \in U$ and let $p_t := p + te_{i_1}$, where $t\in \mathbb{R}$ and $e_{i_1}$ is the vector whose $i_1$-th component is $1$ and other ones are $0$.
Then, by induction hypothesis, there exists $\mathbf{D}_{k-1}: U \rightarrow \mathcal{D}_{k-2}(\mathbb{R}^d)$ such that for sufficiently small $t$,
\begin{align}
 D(h\circ F)(p_t) - \left[\bigg( \prod_{j=2}^k \partial_{x_{i_j}} F_{p_t} \cdot \bm{\partial}\bigg)h\right] (F(p_t)) + (\mathbf{D}_{k-1}(p_t)h)(F(p_t)).
\label{t to 0}
\end{align} 
We define a smooth function from $U$ to $\mathcal{D}_{k-1}(\mathbb{R}^d)$ by
\[
\mathbf{D}_k(p) := \left. \frac{{\rm d}}{{\rm d}t}
\left\{\mathbf{D}_{k-1}(p_t) - \prod_{j=1}^k \big( \partial_{x_{i_j}} F_{p_t} \cdot \bm{\partial} \big) \right\}  \right|_{t=0} 
+ \big(\partial_{x_{i_1}}F_{p_t} \cdot \bm{\partial}\big)\cdot \mathbf{D}_{k-1}(p).
\] 
Then, we apply ${\rm d}/{\rm d}t|_{t=0}$ to \eqref{t to 0}, and by direct computation, we have the formula \eqref{to prove formula dayo} in the case of $n=k$.
\end{proof}

We define the pull-back $f^*: \mathcal{E}(X) \rightarrow \mathcal{E}(X)$ by allocating $h \circ f$ to $h \in \mathcal{E}(X)$.
Then $f^*$ induces a continuous linear map on $\mathcal{E}(X)$ by Lemma \ref{lem: faa di bruno}.
The dual operator $f_*:=(f^*)': \mathcal{E}(X)' \rightarrow \mathcal{E}(X)'$ is also a continuous linear operator.
We note that $f^*\iota = \iota C_f$ holds.

Let $p \in X$ and fix a local coordinate $\phi$ from an open neighborhood of $p$ onto an open subset $\mathbb{R}^d$.
Then, we define an injective linear map $\delta_{p,\phi}\colon \mathcal{D}(\mathbb{R}^d) \rightarrow \mathcal{E}(X)'$ by
\begin{align}
\delta_{p,\phi}(D)(h):= D(h\circ \phi^{-1})(\phi(p))  \label{delta p phi}
\end{align}
for $D \in \mathcal{D}(\mathbb{R}^d)$ and $h \in \mathcal{E}(X)$.
By Lemma \ref{lem: faa di bruno}, the set $\delta_{p, \phi}(\mathcal{D}(\mathbb{R}^d))$ is independent of the choice of the local coordinate $\phi$, and we define subspaces of $\mathcal{E}(X)'$ by 
\begin{align*}
  \mathcal{D}(X)_p  & := \delta_{p,\phi}(\mathcal{D}(\mathbb{R}^d)), \\
  \mathcal{D}_n(X)_p & := \delta_{p,\phi}(\mathcal{D}_n(\mathbb{R}^d)). 
\end{align*}
We equip $\mathcal{E}(X)'$ with an ascending filtration $\{\mathcal{D}_n(X)_p\}_{n \ge 0}$.
We also provide an ascending filtration $\{\iota'(\mathcal{D}_n(X)_p)\}_{n\ge0}$ with $V$.
If $X$ is an open subset of $\mathbb{R}^d$, we always take the local coordinate $\phi$ as a canonical inclusion and denote by $\delta_p$ instead $\delta_{p, \phi}$.

As an immediate consequence of Lemma \ref{lem: faa di bruno}, we have the following statement:
\begin{lemma}
\label{lem: explicit push on a fixed point}
Let $p \in X$ be a fixed point of $f$, namely $f(p) = p$. 
Let $\phi$ be a diffeomorphism from an open neighborhood of $p$ into an open subset of $\mathbb{R}^d$.
Then, for any $\partial_{x_{i_1}} \cdots \partial_{x_{i_n}} \in \mathcal{D}_n(\mathbb{R}^d)$ ($i_j \in \{1,\dots,d\}$), we have
\begin{align}
  f_*[\delta_{p,\phi}(\partial_{x_{i_1}} \cdots\partial_{x_{i_n}})] - \delta_{p,\phi}\left( \prod_{j=1}^d \big(\partial_{x_{i_j}} f^\phi_{\phi(p)} \cdot \bm{\partial}\big)\right) \in \mathcal{D}_{n-1}(X)_p.
    \label{explicit push}
\end{align}
Here, $f^\phi := \phi \circ f \circ \phi^{-1}$ is defined on a sufficiently small neighborhood of $\phi(p)$, and the other notations are as in Lemma \ref{lem: faa di bruno}.
\end{lemma}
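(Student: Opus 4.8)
The plan is to reduce \eqref{explicit push} to the purely local statement of Lemma \ref{lem: faa di bruno} on an open subset of $\mathbb{R}^d$. First I would record two book-keeping facts: the functional $\delta_{p,\phi}$ and the filtration $\{\mathcal{D}_n(X)_p\}_{n\ge 0}$ depend only on germs at $p$ of functions in $\mathcal{E}(X)$, and $f^\phi := \phi\circ f\circ\phi^{-1}$ is a well-defined smooth map from some neighborhood of $q := \phi(p)$ into $\mathbb{R}^d$. Hence it suffices to argue on such a neighborhood (shrinking it if necessary so that $f^\phi$ maps it into the coordinate patch), and no global issue arises.

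Next, unwinding the definitions of $f_* = (f^*)'$ and of $\delta_{p,\phi}$, for every $h\in\mathcal{E}(X)$ one has
\begin{align*}
 f_*[\delta_{p,\phi}(\partial_{x_{i_1}}\cdots\partial_{x_{i_n}})](h)
 &= (\partial_{x_{i_1}}\cdots\partial_{x_{i_n}})\big((h\circ\phi^{-1})\circ f^\phi\big)(q),
\end{align*}
using $(h\circ f)\circ\phi^{-1} = (h\circ\phi^{-1})\circ f^\phi$. Since $f(p) = p$, the point $q$ is a fixed point of $f^\phi$, so applying Lemma \ref{lem: faa di bruno} with $F := f^\phi$ (with $r = d$) and $\tilde h := h\circ\phi^{-1}$ at $q$ produces a smooth map $\mathbf{D}_n$ into $\mathcal{D}_{n-1}(\mathbb{R}^d)$ with
\begin{align*}
 (\partial_{x_{i_1}}\cdots\partial_{x_{i_n}})(\tilde h\circ F)(q)
 &= \left[\Big(\prod_{j=1}^n \partial_{x_{i_j}} F_q\cdot\bm{\partial}\Big)\tilde h\right](q) + \big(\mathbf{D}_n(q)\tilde h\big)(q),
\end{align*}
where I have used $F(q) = q$. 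The operator $\prod_{j=1}^n(\partial_{x_{i_j}} F_q\cdot\bm{\partial})$ is a homogeneous constant-coefficient differential operator of order $n$, hence lies in $\mathcal{D}_n(\mathbb{R}^d)$, and the first term above is precisely $\delta_{p,\phi}\big(\prod_{j=1}^n \partial_{x_{i_j}} f^\phi_{q}\cdot\bm{\partial}\big)(h)$; the second term is $\delta_{p,\phi}(\mathbf{D}_n(q))(h)$ with $\mathbf{D}_n(q)\in\mathcal{D}_{n-1}(\mathbb{R}^d)$. As $h$ is arbitrary, this yields
\begin{align*}
 f_*[\delta_{p,\phi}(\partial_{x_{i_1}}\cdots\partial_{x_{i_n}})] - \delta_{p,\phi}\Big(\prod_{j=1}^n \partial_{x_{i_j}} f^\phi_{q}\cdot\bm{\partial}\Big)
 &= \delta_{p,\phi}\big(\mathbf{D}_n(q)\big)\in\mathcal{D}_{n-1}(X)_p,
\end{align*}
which is \eqref{explicit push}.

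I do not expect a genuine obstacle: the assertion is indeed an immediate corollary of Lemma \ref{lem: faa di bruno}, as advertised. The only delicate points are the book-keeping ones above --- making sure the composition identity and the invocation of Lemma \ref{lem: faa di bruno} are legitimate despite $f^\phi$ being only locally defined (which is harmless since $\delta_{p,\phi}$ only sees germs at $p$), and checking that $\prod_{j=1}^n(\partial_{x_{i_j}} F_q\cdot\bm{\partial})$ really is an element of $\mathcal{D}_n(\mathbb{R}^d)$ so that $\delta_{p,\phi}$ may be applied to it.
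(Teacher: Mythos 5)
Your proposal is correct and is essentially the paper's own argument: the paper states this lemma as an immediate consequence of Lemma \ref{lem: faa di bruno}, and your write-up simply supplies the routine unwinding of $f_*=(f^*)'$ and $\delta_{p,\phi}$, the local identity $(h\circ f)\circ\phi^{-1}=(h\circ\phi^{-1})\circ f^\phi$, and the fixed-point condition $f^\phi(\phi(p))=\phi(p)$, with the remainder $\mathbf{D}_n(\phi(p))$ landing in $\mathcal{D}_{n-1}(\mathbb{R}^d)$. (You have also, correctly, read the product in \eqref{explicit push} as $\prod_{j=1}^{n}$ rather than the paper's typographical $\prod_{j=1}^{d}$.)
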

As a result, we have the following corollaries:
\begin{corollary} \label{cor: preserving property}
We use the same notation as in Lemma \ref{lem: explicit push on a fixed point}.
Then, we have 
\[f_*(\mathcal{D}_n(X)_p) \subset \mathcal{D}_n(X)_p.\]
Moreover, we have the following commutative diagram:
\[
\xymatrix@C=40pt{
  {\rm gr}[\mathcal{D}(X)_p]\ar[r]_{{\rm gr}_{f_*}}&{\rm gr}[\mathcal{D}(X)_p] \\
\mathbb{C}[t_1,\dots, t_d]\ar[r]^{S({\rm d}f^\phi_{\phi(p)})}\ar[u]^{\cong}&
\mathbb{C}[t_1,\dots, t_d]\ar[u]^{\cong}
}
\]
where the vertical maps are defined by the correspondence $t^\alpha \mapsto \delta_{p, \phi}(\partial_{x}^\alpha)$ and $S({\rm d}f^\phi_{\phi(p)})$ is the linear map defined by the correspondence $t_i \mapsto \sum_{m=1}^d \frac{\partial f^\phi_m}{\partial x_i}(\phi(p))t_m$, namely it satisfies
\[
S({\rm d}f^\phi_{\phi(p)})(t^\alpha) = \prod_{i=1}^d \left(\sum_{m=1}^d \frac{\partial f^\phi_m}{\partial x_i}(\phi(p))t_m \right)^{\alpha_i}.
\]
\end{corollary}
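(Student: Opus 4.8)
The plan is to deduce both assertions directly from Lemma~\ref{lem: explicit push on a fixed point} together with the injectivity of $\delta_{p,\phi}$; nothing beyond bookkeeping is needed. For the inclusion $f_*(\mathcal{D}_n(X)_p)\subset\mathcal{D}_n(X)_p$, I would use that $\mathcal{D}_n(X)_p$ is spanned by the functionals $\delta_{p,\phi}(\partial_{x_{i_1}}\cdots\partial_{x_{i_m}})$ with $m\le n$ and $i_j\in\{1,\dots,d\}$, apply the formula of Lemma~\ref{lem: explicit push on a fixed point} to each such generator, and observe that the leading term $\delta_{p,\phi}\big(\prod_{j=1}^m(\partial_{x_{i_j}}f^\phi_{\phi(p)}\cdot\bm{\partial})\big)$ lies in $\delta_{p,\phi}(\mathcal{D}_m(\mathbb{R}^d))=\mathcal{D}_m(X)_p$ (each factor being an element of $\mathcal{D}_1(\mathbb{R}^d)$), while the correction term lies in $\mathcal{D}_{m-1}(X)_p$; both are contained in $\mathcal{D}_n(X)_p$. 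This simultaneously shows that $f_*$ respects the filtration $\{\mathcal{D}_n(X)_p\}_{n\ge0}$, so that the graded maps ${\rm gr}^n_{f_*}$ are well-defined.

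Next I would pin down the vertical isomorphisms in the diagram. Since $\delta_{p,\phi}$ is injective and carries $\mathcal{D}_n(\mathbb{R}^d)$ onto $\mathcal{D}_n(X)_p$ compatibly with the filtrations, it induces graded isomorphisms ${\rm gr}^n[\mathcal{D}(\mathbb{R}^d)]\cong{\rm gr}^n[\mathcal{D}(X)_p]$. As the classes of the monomials $\partial_x^\alpha$ with $|\alpha|=n$ form a basis of $\mathcal{D}_n(\mathbb{R}^d)/\mathcal{D}_{n-1}(\mathbb{R}^d)$, this identifies ${\rm gr}^n[\mathcal{D}(X)_p]$ with the degree-$n$ homogeneous part of $\mathbb{C}[t_1,\dots,t_d]$ via $t^\alpha\mapsto\delta_{p,\phi}(\partial_x^\alpha)$, which is exactly the stated vertical map; it is an isomorphism of graded vector spaces.

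Then I would check commutativity of the square, and since all four maps are graded-linear it suffices to do so on the monomial basis. Fix $\alpha$ with $|\alpha|=n$ and write $\partial_x^\alpha=\partial_{x_{i_1}}\cdots\partial_{x_{i_n}}$ where each index $i$ occurs with multiplicity $\alpha_i$. Lemma~\ref{lem: explicit push on a fixed point} gives
\[
f_*[\delta_{p,\phi}(\partial_x^\alpha)]\equiv\delta_{p,\phi}\Big(\prod_{i=1}^d\big(\partial_{x_i}f^\phi_{\phi(p)}\cdot\bm{\partial}\big)^{\alpha_i}\Big)\pmod{\mathcal{D}_{n-1}(X)_p}.
\]
Because the partial derivatives $\partial_{x_1},\dots,\partial_{x_d}$ commute, the operator $\prod_{i=1}^d\big(\sum_{m=1}^d\frac{\partial f^\phi_m}{\partial x_i}(\phi(p))\,\partial_{x_m}\big)^{\alpha_i}$ is precisely the image under the substitution $t_m\mapsto\partial_{x_m}$ of the polynomial $\prod_{i=1}^d\big(\sum_{m=1}^d\frac{\partial f^\phi_m}{\partial x_i}(\phi(p))\,t_m\big)^{\alpha_i}=S({\rm d}f^\phi_{\phi(p)})(t^\alpha)$. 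Hence the class of $f_*[\delta_{p,\phi}(\partial_x^\alpha)]$ in ${\rm gr}^n[\mathcal{D}(X)_p]$ corresponds, under the right vertical isomorphism, exactly to $S({\rm d}f^\phi_{\phi(p)})(t^\alpha)$, which is what the bottom-then-right composite produces; this is the desired commutativity.

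The computations here are entirely routine. The only point that genuinely has to be respected is that the product of first-order operators $\prod_{i=1}^d(\partial_{x_i}f^\phi_{\phi(p)}\cdot\bm{\partial})^{\alpha_i}$ collapses to the image of a commutative polynomial — this is what forces ${\rm gr}_{f_*}$ to be the symmetric-power map $S(\cdot)$ on $\mathbb{C}[t_1,\dots,t_d]$ rather than something operator-valued — together with the fact, supplied by Lemma~\ref{lem: faa di bruno}, that the correction term $\mathbf{D}_n$ is of strictly lower order and therefore disappears in the graded quotient. Beyond that, the only mild obstacle is notational: keeping track of which monomial in the $\partial_{x_m}$ matches which monomial in the $t_m$ under the vertical identifications.
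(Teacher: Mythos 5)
Your proposal is correct and follows essentially the same route as the paper: the paper's own (very brief) proof likewise reads off the filtration-preservation and the graded action directly from Lemma~\ref{lem: explicit push on a fixed point}, identifying the class of $f_*[\delta_{p,\phi}(\partial_x^\alpha)]$ with $\delta_{p,\phi}\big((\partial_x f^\phi_{\phi(p)}\cdot\bm{\partial})^\alpha\big)$ modulo lower order. Your write-up merely makes explicit the bookkeeping (basis of the graded pieces, commutativity of the constant-coefficient first-order operators) that the paper leaves implicit.
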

\begin{proof}
  By Lemma \ref{lem: explicit push on a fixed point}, $f_*(\mathcal{D}_n(X)_p) \subset \mathcal{D}_n(X)_p$ is obvious.
 By \eqref{explicit push}, ${\rm gr}_{f_*}$ allocates the element $\delta_{p,\phi}\big((\partial_{x}f^\phi(\phi(p)) \cdot \bm{\partial})^\alpha \big)$ to $\delta_{p,\phi}\big(\partial_{x}^\alpha\big)$, and thus we have the second statement.
\end{proof}

For $n \ge 0$, we define a surjective linear map induced by $\iota'$:
\begin{align}
  \kappa^n_{p}\colon \mathcal{D}_n(X)_p/\mathcal{D}_{n-1}(X)_p \longrightarrow \iota'\left(\mathcal{D}_n(X)_p\right)/\iota'\left(\mathcal{D}_{n-1}(X)_p\right), 
\end{align}
Here, we define $\mathcal{D}_{-1}(X)_p := \{0\}$.
Now, we provide the proof of Theorem \ref{mainthm}.
\begin{proof}[Proof of Theorem \ref{mainthm}]
  Let $\alpha$ be an eigenvalue of ${\rm d}f^r_p$.
  By Corollary \ref{cor: preserving property}, for any $n\ge 0$, there exists $v_n \in {\rm gr}^n[\mathcal{D}(X)_p] \setminus \{0\}$
  such that ${\rm gr}^n_{f_*^r}(v_n) = \alpha^n v_n$, and thus we have ${\rm gr}^n_{{C_f^r}'}(w_n) = \alpha^nw_n$, where $w_n := {\kappa}^n_{p}(v_n)$.
  Since $v_n \notin {\rm Ker} ( {\rm gr}^n_{f_*^r} )$, we see that $w_n \neq 0$ for infinitely many $n\ge0$. 
We note that $\|C_f'\|^r \ge \| {\rm gr}^n_{{C_f^r}'} \|$, where norms $\| \cdot \|$ for these two operators are the operator norms with respect to the quasi-norms on $V$ and its subquotient spaces, respectively. % (see Remark \ref{rmk: quasi-norm of subquotient}),
Thus, we have
  \begin{align}
    |\alpha| \le \|C_f'\|^{r/n}. \label{inequality 1}
  \end{align}
    Since \eqref{inequality 1} holds for infinitely many $n$, we have $|\alpha| \le 1$.
\end{proof}
This theorem implies that if a dynamical system induces a bounded composition operator on a certain functional quasi-Banach space, it does not have periodic points with a repelling direction.

\subsection{Boundedness of composition operators and stabilities of holomorphic dynamics}\label{subsec: stability cpx mfd}
Let $X$ be a complex manifold of complex dimension $d$.
We equip $\mathcal{A}(X)$ with the topology of uniform convergence on compact sets.
Here, we provide several corresponding results in the complex case.
Let 
\begin{align*}
 \mathcal{D}^{\rm hol}(\mathbb{C}^d) := \bigoplus_{n_1, \dots, n_d = 0}^\infty \mathbb{C} \partial_{z_1}^{n_1}\cdots \partial_{z_d}^{n_d}
\end{align*}
be the space of complex differential operators on $\mathcal{A}(\mathbb{C}^d)$ and for each $n \ge 0$, we define
\begin{align*}
  \mathcal{D}_n^{\rm hol}(\mathbb{C}^d) := \bigoplus_{n_1+ \dots + n_d \le n} \mathbb{C} \partial_{z_1}^{n_1}\cdots \partial_{z_d}^{n_d}.
\end{align*}
Let $p \in X$ and fix a local complex coordinate $\phi$ from an open neighborhood of $p$ onto an open subset $\mathbb{C}^d$.
Then, we define an injective linear map 
\[\delta_{p,\phi}^{\rm hol}\colon \mathcal{D}^{\rm hol}(\mathbb{C}^d) \longrightarrow \mathcal{A}(X)'\] 
in the same way as \eqref{delta p phi}. 
We also define $\mathcal{D}^{\rm hol}(X)_p$ and $\mathcal{D}_n^{\rm hol}(X)_p$ by the images of $\mathcal{D}^{\rm hol}(\mathbb{C}^d)$ and $\mathcal{D}_n^{\rm hol}(\mathbb{C}^d)$ under $\delta_{p, \phi}^{\rm hol}$, respectively.
In terms of Lemma \ref{lem: explicit push on a fixed point} and Corollary \ref{cor: preserving property}, for $n \ge 0$, we define
\begin{align*}
  {\rm gr}^{n, {\rm hol}}_{f_*}&\colon \mathcal{D}_n^{\rm hol}(X)_p/\mathcal{D}_{n-1}^{\rm hol}(X)_p \longrightarrow \mathcal{D}_n^{\rm hol}(X)_p/\mathcal{D}_{n-1}^{\rm hol}(X)_p,\\ 
  \kappa^{n, {\rm hol}}_p &\colon \mathcal{D}_n^{\rm hol}(X)_p/\mathcal{D}_{n-1}^{\rm hol}(X)_p \longrightarrow \iota'\left(\mathcal{D}_n^{\rm hol}(X)_p\right)/\iota'\left(\mathcal{D}_{n-1}^{\rm hol}(X)_p\right), 
\end{align*}
where $p$ is a fixed point of $f$. 

We also obtain a similar result of Theorem \ref{thm: stability of dynamics} with a similar proof:
\begin{theorem} \label{thm: stability of dynamics complex case}
Let $X$ be a complex manifold and $f\colon X \rightarrow X$ be a holomorphic map.
Let $V \subset \mathcal{A}(X)$ be a quasi-Banach space with continuous inclusion $\iota: V \rightarrow \mathcal{A}(X)$.
Let $p \in X$ be a periodic point of $f$ such that $f^r(p) = p$ for some $r>0$.
Assume the following two conditions:  
\begin{enumerate}
  \item $C_f$ is a bounded linear operator on $V$, and 
  \item\label{condition 2 complex} for infinitely many $n\ge 0$, ${\rm Ker}(\kappa^{n, {\rm hol}}_{p}) \subset {\rm Ker}({\rm gr}^{n, {\rm hol}}_{f_*^r})$.
\end{enumerate}
  Then, any eigenvalue $\alpha$ of ${\rm d}f^r_p: T_p^{1,0}(X) \rightarrow T_p^{1,0}(X)$ satisfies the inequality $|\alpha|\le 1$. Here, $T_p^{1,0}(X)$ is the $(1,0)$ part of the tangent vector over $\mathbb{C}$ of $X$ at $p$
\end{theorem}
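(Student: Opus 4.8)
The plan is to run the proof of Theorem \ref{thm: stability of dynamics} essentially word for word, with the smooth objects $\mathcal{D}_n(X)_p$, $\kappa^n_p$, ${\rm gr}^n_{f_*^r}$ replaced throughout by their holomorphic counterparts $\mathcal{D}_n^{\rm hol}(X)_p$, $\kappa^{n,{\rm hol}}_p$, ${\rm gr}^{n,{\rm hol}}_{f_*^r}$. That proof uses only three inputs, each of which has a holomorphic version obtained by repeating the earlier arguments with $\partial_{z_j}$ in place of $\partial_{x_j}$: (i) when $p$ is a fixed point, $f_*$ preserves the filtration $\{\mathcal{D}_n^{\rm hol}(X)_p\}_{n\ge0}$, and under the identification ${\rm gr}[\mathcal{D}^{\rm hol}(X)_p]\cong\mathbb{C}[t_1,\dots,t_d]$ the induced graded map is the algebra endomorphism $S({\rm d}f^\phi_{\phi(p)})$ determined by the differential (the holomorphic analogues of Lemma \ref{lem: explicit push on a fixed point} and Corollary \ref{cor: preserving property}, established by the holomorphic Fa\`a di Bruno computation, using that compositions of holomorphic maps are holomorphic); (ii) the intertwining $f^*\iota=\iota C_f$, which is part of the setup; and (iii) the elementary bound $\|{\rm gr}^n_A\|\le\|A\|$ for the operator induced on a subquotient carrying the quotient of the restricted quasi-norm, together with submultiplicativity of the operator quasi-norm passing to the dual Banach space $V'$.

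Carrying this out, one may assume $\alpha\neq0$, since $|\alpha|\le1$ is immediate otherwise. As $\alpha$ is an eigenvalue of ${\rm d}f^r_p$ on $T^{1,0}_p(X)$, in a local chart $\phi$ near $p$ there is a linear form $\ell\in\mathbb{C}[t_1,\dots,t_d]$ with $S({\rm d}(f^r)^\phi_{\phi(p)})(\ell)=\alpha\ell$; since $S(\cdot)$ is an algebra endomorphism, $S({\rm d}(f^r)^\phi_{\phi(p)})(\ell^n)=\alpha^n\ell^n$ with $\ell^n\neq0$. Transporting back through the isomorphism in (i) produces $v_n\in{\rm gr}^{n,{\rm hol}}[\mathcal{D}^{\rm hol}(X)_p]\setminus\{0\}$ with ${\rm gr}^{n,{\rm hol}}_{f_*^r}(v_n)=\alpha^n v_n$. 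Because $\alpha^n\neq0$ we have $v_n\notin{\rm Ker}({\rm gr}^{n,{\rm hol}}_{f_*^r})$, so condition \eqref{condition 2 complex} gives $v_n\notin{\rm Ker}(\kappa^{n,{\rm hol}}_p)$ for the infinitely many $n$ under consideration, whence $w_n:=\kappa^{n,{\rm hol}}_p(v_n)\neq0$. Dualizing $f^*\iota=\iota C_f$ yields $C_f'\iota'=\iota' f_*$, and since $C_f^r=C_{f^r}$ this iterates to $(C_f')^r\iota'=\iota' f_*^r$, so $(C_{f^r})'=(C_f')^r$ preserves the filtration $\{\iota'(\mathcal{D}_n^{\rm hol}(X)_p)\}_{n\ge0}$ and its $n$-th graded piece sends $w_n$ to $\alpha^n w_n$. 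As $w_n\neq0$, this gives $|\alpha|^n\le\|{\rm gr}^{n,{\rm hol}}_{(C_{f^r})'}\|\le\|C_f'\|^r$, and letting $n\to\infty$ over the admissible $n$ forces $|\alpha|\le1$.

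The only genuine point requiring care — and it is minor — is to verify that Lemma \ref{lem: faa di bruno}, Lemma \ref{lem: explicit push on a fixed point} and Corollary \ref{cor: preserving property} transcribe verbatim to the holomorphic category: namely that the auxiliary maps $\mathbf{D}_n$ built in the induction of Lemma \ref{lem: faa di bruno} can be taken holomorphic, and that the evaluation functionals $\delta^{\rm hol}_{p,\phi}(D)$ belong to $\mathcal{A}(X)'$ for the compact-open topology (so that $\iota'$ and the maps $\kappa^{n,{\rm hol}}_p$ are defined), the latter being Cauchy's estimates for derivatives. Everything else is a direct transcription of the proof of Theorem \ref{thm: stability of dynamics}.
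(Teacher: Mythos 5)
Your proposal is correct and matches the paper's treatment: the paper proves this theorem by exactly the transcription you describe, running the argument of Theorem \ref{thm: stability of dynamics} with the holomorphic analogues of Lemma \ref{lem: faa di bruno}, Lemma \ref{lem: explicit push on a fixed point} and Corollary \ref{cor: preserving property}, the intertwining $f^*\iota=\iota C_f$, and the operator quasi-norm bound on the graded subquotients. Your explicit handling of $\alpha\neq 0$ and the remark that $\delta^{\rm hol}_{p,\phi}(D)\in\mathcal{A}(X)'$ via Cauchy estimates are minor (and welcome) refinements of the same argument.
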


We prove a simple but important lemma for the complex case:
\begin{lemma}
\label{lem: density and finite-dimensional criterion}
Let $X$ be a connected complex manifold. 
Let $V\subset \mathcal{A}(X)$ be a quasi-Banach space with a continuous inclusion $\iota\colon V \hookrightarrow \mathcal{A}(X)$.
Then, $V$ is infinite-dimensional if and only if 
\[\iota'(\mathcal{D}^{\rm hol}_n(X)_p) / \iota'(\mathcal{D}^{\rm hol}_{n-1}(X)_p) \neq 0\] 
for infinitely many $n\ge0$.
\end{lemma}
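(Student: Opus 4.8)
The plan is to prove the two implications separately, taking the contrapositive in each direction. Throughout I would fix a local holomorphic chart $\phi$ near $p$; by the holomorphic analogue of Lemma~\ref{lem: faa di bruno} the subspaces $\mathcal{D}^{\rm hol}_n(X)_p$, and hence $\mathcal{D}^{\rm hol}(X)_p = \bigcup_{n\ge0}\mathcal{D}^{\rm hol}_n(X)_p$, do not depend on this choice, and $\mathcal{D}^{\rm hol}(X)_p$ is spanned by the functionals $\delta^{\rm hol}_{p,\phi}(\partial_z^\alpha)$ with $\alpha\in\mathbb{Z}_{\ge0}^d$. I would also note that these are genuinely elements of $\mathcal{A}(X)'$ because uniform convergence on compact sets controls all derivatives of holomorphic functions.

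For the implication ``nontrivial tower of quotients $\Rightarrow$ $V$ infinite-dimensional'', I would observe that $\{\iota'(\mathcal{D}^{\rm hol}_n(X)_p)\}_{n\ge0}$ is an ascending chain of linear subspaces of $V'$, and the hypothesis says that infinitely many of its inclusions are strict. Hence $\bigcup_{n\ge0}\iota'(\mathcal{D}^{\rm hol}_n(X)_p)$ is an infinite-dimensional subspace of $V'$, so $\dim_{\mathbb{C}} V' = \infty$; since a finite-dimensional vector space has a dual of the same finite dimension, $V$ must be infinite-dimensional. This direction needs neither connectedness of $X$ nor anything about holomorphy beyond the filtration structure.

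For the converse I would argue by contraposition: assume there is an $N$ with $\iota'(\mathcal{D}^{\rm hol}_n(X)_p)=\iota'(\mathcal{D}^{\rm hol}_{n-1}(X)_p)$ for all $n\ge N$. Then the chain stabilizes, so $W:=\iota'(\mathcal{D}^{\rm hol}(X)_p)=\iota'(\mathcal{D}^{\rm hol}_N(X)_p)$ is finite-dimensional; choose a basis $\ell_1,\dots,\ell_m$ of $W$. The key step is that the functionals in $W$ separate points of $V$: if $h\in V$ satisfies $\ell(h)=0$ for all $\ell\in W$, then $\delta^{\rm hol}_{p,\phi}(\partial_z^\alpha)(\iota h)=\partial_z^\alpha(h\circ\phi^{-1})(\phi(p))=0$ for every multi-index $\alpha$, i.e.\ $h\circ\phi^{-1}$ vanishes to infinite order at $\phi(p)$; being holomorphic it then vanishes on a neighborhood of $p$, and since $X$ is connected the identity theorem forces $h\equiv0$. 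Consequently $h\mapsto(\ell_1(h),\dots,\ell_m(h))$ is an injective linear map $V\to\mathbb{C}^m$, whence $\dim_{\mathbb{C}} V\le m<\infty$, completing the contrapositive.

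I do not expect a serious obstacle. The only substantive ingredient is the separation-of-points step, and its two inputs — that $W$ is spanned by all the Taylor-coefficient functionals $\delta^{\rm hol}_{p,\phi}(\partial_z^\alpha)$ at $p$, and the identity theorem on the connected complex manifold $X$ — are both standard; the latter is exactly where connectedness of $X$ is used, and the former relies only on the definition of $\delta^{\rm hol}_{p,\phi}$ and the chart-independence already established. The rest is elementary linear algebra together with the fact that a finite-dimensional space has a finite-dimensional dual.
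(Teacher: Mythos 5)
Your proposal is correct and follows essentially the same route as the paper: the forward direction via the strictly increasing chain in $V'$, and the converse by contraposition, using the stabilization $\iota'(\mathcal{D}^{\rm hol}(X)_p)=\iota'(\mathcal{D}^{\rm hol}_N(X)_p)$ to get an injective linear map of $V$ into a finite-dimensional space, with injectivity coming from vanishing of all derivatives at $p$ plus the identity theorem on the connected manifold $X$. The only cosmetic difference is that you phrase the finite-dimensional target via a basis of $W\subset V'$, whereas the paper uses the explicit map $h\mapsto\bigl(h^{(\alpha)}(p)\bigr)_{|\alpha|\le k}$; the arguments are otherwise identical.
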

\begin{proof}
We prove the ``only if'' part as the ``if'' part is obvious.
Suppose there exists $k \ge 0$ such that for any $n\ge k+1$, 
\[\iota'(\mathcal{D}^{\rm hol}_n(X)_p) / \iota'(\mathcal{D}^{\rm hol}_{n-1}(X)_p) = 0.\] 
Then, we see that 
\begin{align}
\iota'(\mathcal{D}^{\rm hol}(X)_p) = \iota'(\mathcal{D}^{\rm hol}_k(X)_p). \label{eq: finite dimensionality}
\end{align}
We define a linear map by 
\[F: V \rightarrow \bigoplus_{\substack{\alpha \in \mathbb{Z}_{\ge 0}^d \\ |\alpha| \le k}} \mathbb{C};~ h \mapsto \big(h^{(\alpha)}(p)\big)_{|\alpha| \le k}, \]
where $h^{(\alpha)}(p) := \delta_{p, \phi}(\partial_z^\alpha)(h)$ with a local coordinate $\phi$ at $p$.
Then, by \eqref{eq: finite dimensionality}, we see that $F$ is injective as $X$ is connected.
Therefore, we conclude that $V$ is finite-dimensional.
\end{proof}
As a corollary, if $X$ is 1-dimensional, we have the following statement:
\begin{corollary} \label{cor: condition 2 riemann surface}
Assume $X$ is a connected complex manifold of 1 dimension.
Let $V \subset \mathcal{A}(X)$ be a quasi-Banach space with continuous inclusion $\iota\colon V \hookrightarrow \mathcal{A}(X)$.
Unless $V$ is finite dimensional, for infinitely many $n \ge 0$, we have ${\rm Ker}(\kappa^{n,\rm hol}_p) = \{0\}$, namely $\kappa^{n, \rm hol}_p$ is an isomorphism.
In particular, if $V$ is infinite-dimensional, the condition \eqref{condition 2} of Theorem \ref{thm: stability of dynamics complex case} is always true for any holomorphic map on $X$.
\end{corollary}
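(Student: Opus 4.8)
The plan is to exploit the fact that in complex dimension one the graded pieces $\mathcal{D}^{\rm hol}_n(X)_p/\mathcal{D}^{\rm hol}_{n-1}(X)_p$ are \emph{one-dimensional}. Indeed, $\mathcal{D}^{\rm hol}_n(\mathbb{C}) = \bigoplus_{k=0}^{n}\mathbb{C}\,\partial_z^k$, so $\dim\bigl(\mathcal{D}^{\rm hol}_n(\mathbb{C})/\mathcal{D}^{\rm hol}_{n-1}(\mathbb{C})\bigr) = 1$, and since $\delta^{\rm hol}_{p,\phi}$ is injective it follows that $\dim\bigl(\mathcal{D}^{\rm hol}_n(X)_p/\mathcal{D}^{\rm hol}_{n-1}(X)_p\bigr) = 1$ for every $n\ge 0$.

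First I would record the elementary observation that a surjective linear map out of a one-dimensional vector space is either the zero map (precisely when its target is $\{0\}$) or an isomorphism. Applying this to the surjection $\kappa^{n,\rm hol}_p$, for each $n$ exactly one of the following holds: either $\iota'(\mathcal{D}^{\rm hol}_n(X)_p)/\iota'(\mathcal{D}^{\rm hol}_{n-1}(X)_p)=0$, or $\kappa^{n,\rm hol}_p$ is an isomorphism and in particular ${\rm Ker}(\kappa^{n,\rm hol}_p)=\{0\}$.

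Next I would invoke Lemma \ref{lem: density and finite-dimensional criterion}: if $V$ is infinite-dimensional, then $\iota'(\mathcal{D}^{\rm hol}_n(X)_p)/\iota'(\mathcal{D}^{\rm hol}_{n-1}(X)_p)\neq 0$ for infinitely many $n\ge 0$ (and this holds at every point $p\in X$, in particular at the given periodic point). For each such $n$ the first alternative above is ruled out, so $\kappa^{n,\rm hol}_p$ is an isomorphism; this is the first assertion. The ``in particular'' statement is then immediate: for those infinitely many $n$ we have ${\rm Ker}(\kappa^{n,\rm hol}_p)=\{0\}\subset {\rm Ker}({\rm gr}^{n,\rm hol}_{f_*^r})$ trivially, which is exactly condition \eqref{condition 2 complex} of Theorem \ref{thm: stability of dynamics complex case}.

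I do not expect any real obstacle here: the corollary is a direct consequence of Lemma \ref{lem: density and finite-dimensional criterion} combined with the one-dimensionality of the graded pieces in the Riemann surface case. The only point deserving a moment's attention is that condition \eqref{condition 2 complex} must hold at the specific periodic point $p$ appearing in Theorem \ref{thm: stability of dynamics complex case}, but since both the dimension count and Lemma \ref{lem: density and finite-dimensional criterion} apply verbatim at every $p\in X$, this causes no difficulty.
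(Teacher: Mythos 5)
Your proposal is correct and follows essentially the same route as the paper's own proof: one-dimensionality of the graded pieces $\mathcal{D}^{\rm hol}_n(X)_p/\mathcal{D}^{\rm hol}_{n-1}(X)_p$ forces the surjection $\kappa^{n,\rm hol}_p$ to be either zero or an isomorphism, and Lemma \ref{lem: density and finite-dimensional criterion} rules out the zero case for infinitely many $n$ when $V$ is infinite-dimensional. The only difference is that you spell out the elementary dichotomy and the pointwise applicability a bit more explicitly than the paper does.
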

\begin{proof}
Since $\mathcal{D}_n^{\rm hol}(X)_p/ \mathcal{D}_{n-1}^{\rm hol}(X)_p$ is always one-dimensional and $\kappa^{n, \rm hol}_p$ is a surjective linear map onto $\iota'(\mathcal{D}^{\rm hol}_n(X)_p) / \iota'(\mathcal{D}^{\rm hol}_{n-1}(X)_p)$, the homomorphism $\kappa^{n, \rm hol}_p$ is isomorphism if and only if 
\[\iota'(\mathcal{D}^{\rm hol}_n(X)_p) / \iota'(\mathcal{D}^{\rm hol}_{n-1}(X)_p) \neq 0.\] 
Thus, the corollary follows from Lemma \ref{lem: density and finite-dimensional criterion}.
\end{proof}

\section{Function spaces continuously included in the space of smooth or holomorphic functions} \label{sec: function spaces}
Let $X$ be a smooth manifold of dimension $d$.
This section provides several important and typical examples of function spaces on $X$ continuously included in $\mathcal{E}(X)$.

\subsection{Reproducing kernel Hilbert space (RKHS)}
The theory of RKHS is among the most typical frameworks used to construct a Hilbert space continuously included in $\mathcal{E}(X)$ or $\mathcal{A}(X)$.
We first review a general notion of RKHS
\begin{definition}
Let $X$ be an arbitrary abstract nonempty set and let $k:X\times X\rightarrow\mathbb{C}$ be a map.
We say $k$ is a positive definite kernel if for any finite $p_1, \dots, p_n \in X$, the matrix $(k(p_i, p_j))_{i,j=1,\dots,n}$ is a Hermitian positive semi-definite matrix.
\end{definition}
Then, there is a classical result, known as the Moore-Aronszajn theorem, that states that for any positive definite  kernel $k$, there uniquely exists a Hilbert space $H_k$ composed of $\mathbb{C}$-valued maps on $X$ such that the following two conditions hold:
\begin{enumerate}
\item for any $p\in X$, the map $k_p:= k(p,\cdot)$ is an element of $H_k$, and
\item for any $p\in X$ and $h\in H_k$, we have $\langle h, k_p\rangle_{H_k}=h(p)$. \label{reproducing property}
\end{enumerate}
We call $H_k$ a {\em reproducing kernel Hilbert space}, or RKHS for short (see \cite{SS} for more details).
\begin{remark} \label{rmk: characterization of RKHS}
Let $H$ be an abstract Hilbert space and $H_k$ be an RKHS associated with a positive definite kernel $k$ on $X$.
If there exists a map $\varphi: X \to H$ such that $\langle \varphi(p), \varphi(p') \rangle_H = k(p,p')$ and a linear subspace generated by $\varphi(X)$ is dense in $H$, then the correspondence $\varphi(p) \mapsto k_p$ define a well-defined isomorphism from $H$ to $H_k$.
Thus, as a Hilbert space, RKHS is characterized as a pair of Hilbert space $H$ and map $\varphi:X\rightarrow H$ satisfying the above properties.
\end{remark}

In this paper, we say a positive definite kernel $k$ is {\em smooth} (resp. {\em holomorphic}) if $k_p \in \mathcal{E}(X)$ (resp. $\mathcal{A}(X)$) for any $p \in X$.
Note that by the symmetric property of $k$ if $k$ is a smooth (resp. holomorphic) positive definite kernel, then $k$ is smooth (resp. anti-holomorphic) with respect to the first variable as a function on $X \times X$.
As in the following proposition, $H_k$ inherits regularity of $k$:
\begin{proposition}
\label{prop: Hk subset CinftyX}
Let $X$ be a  {\em smooth} (resp. complex) manifold, and let $k:X \times X \rightarrow \mathbb{C}$ be a smooth (resp. holomorphic) positive definite kernel.
Then, $H_k$ is contained in $\mathcal{E}(X)$ (resp. $\mathcal{A}(X)$).
Moreover, the natural inclusion $H_k \hookrightarrow \mathcal{E}(X)$ (resp. $\mathcal{A}(X)$) is continuous.
\end{proposition}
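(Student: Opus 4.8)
The plan is to verify the two assertions — that $H_k \subseteq \mathcal{E}(X)$ (resp.\ $\mathcal{A}(X)$) and that the inclusion is continuous — by exploiting the reproducing property together with the regularity of the kernel and the closed graph theorem. I would work locally: fix a coordinate chart $\phi\colon U \to \mathbb{R}^d$ (resp.\ $\mathbb{C}^d$) and argue that every $h \in H_k$ is smooth (resp.\ holomorphic) on $U$ with locally uniformly controlled derivatives.

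\smallskip

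\emph{Step 1: pointwise smoothness via difference quotients.} For $h \in H_k$, the reproducing property gives $h(p) = \langle h, k_p\rangle_{H_k}$. Writing $p \mapsto k_p$ as a map $X \to H_k$, I would first show this map is itself smooth (resp.\ holomorphic) as an $H_k$-valued map: its weak derivatives exist because, for fixed $h$, $p \mapsto \langle h, k_p\rangle = h(p)$ is controlled once we know $k$ is smooth in each variable, and one upgrades weak to strong differentiability using that $\|k_p - k_{p'}\|^2 = k(p,p) - k(p,p') - k(p',p) + k(p',p')$ together with the assumed joint regularity of $k$ (smoothness in the second variable plus anti-holomorphy/smoothness in the first). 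Concretely, along a coordinate direction the difference quotients $t^{-1}(k_{p+te_j} - k_p)$ form a Cauchy net in $H_k$ as $t \to 0$, by expanding the norm and using that the relevant mixed partials of $k$ exist and are continuous; denote the limit $\partial_j k_p \in H_k$. Iterating, $\partial^\alpha k_p \in H_k$ is defined for every multi-index $\alpha$, and $\partial^\alpha h(p) = \langle h, \partial^{\bar\alpha} k_p \rangle_{H_k}$ (with the appropriate conjugation bookkeeping), so $h$ is $C^\infty$ (resp.\ holomorphic, by checking the Cauchy–Riemann equations directly from this formula) on $U$.

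\smallskip

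\emph{Step 2: local uniform bounds and continuity.} On a compact set $K \subset U$, the map $K \ni p \mapsto \partial^\alpha k_p \in H_k$ is continuous, hence $\sup_{p\in K}\|\partial^\alpha k_p\|_{H_k} =: M_{K,\alpha} < \infty$. Then $\sup_{p \in K}|\partial^\alpha h(p)| \le M_{K,\alpha}\,\|h\|_{H_k}$ by Cauchy–Schwarz, which is exactly the statement that the inclusion $H_k \hookrightarrow \mathcal{E}(X)$ is continuous for the seminorms defining the topology on $\mathcal{E}(X)$ (uniform convergence of all derivatives on compacts, transported through the charts as in \eqref{sheaf representation}); in the holomorphic case one only needs the $\alpha = 0$ seminorms. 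Alternatively — and more cheaply — once Step 1 gives $H_k \subseteq \mathcal{E}(X)$ as sets, continuity of the inclusion follows from the closed graph theorem: $H_k$ is a Hilbert (hence Fréchet) space, $\mathcal{E}(X)$ is Fréchet, and if $h_n \to h$ in $H_k$ and $h_n \to g$ in $\mathcal{E}(X)$, then evaluating against $k_p$ shows $h_n(p) \to h(p)$ for every $p$, while $\mathcal{E}(X)$-convergence forces $h_n(p) \to g(p)$, so $g = h$.

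\smallskip

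\emph{Main obstacle.} The delicate point is Step 1: promoting the pointwise/weak regularity of $k$ in each variable separately to genuine $H_k$-valued differentiability of $p \mapsto k_p$, because a priori one only knows $k$ is smooth in each slot, and the norm identity for $k_p - k_{p'}$ mixes the two slots. The resolution is that a smooth positive definite kernel is automatically jointly smooth — this is where the remark that $k$ is smooth (resp.\ anti-holomorphic) in the first variable, following from the Hermitian symmetry $k(p,p') = \overline{k(p',p)}$, does the real work, letting one differentiate the norm expansion under control. In the holomorphic case the same mechanism yields that $p \mapsto k_p$ is an $H_k$-valued holomorphic map, from which holomorphy of each $h$ is immediate.
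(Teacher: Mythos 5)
The paper itself proves this proposition by a one-line citation to Saitoh--Sawano, so what you have written is essentially a reconstruction of the standard argument behind that citation: show that $p \mapsto k_p$ is smooth (resp.\ holomorphic) as an $H_k$-valued map, deduce $\partial^\alpha h(p) = \langle h, \partial^\alpha k_p\rangle_{H_k}$, and obtain continuity of the inclusion either from the Cauchy--Schwarz bound $\sup_K|\partial^\alpha h| \le M_{K,\alpha}\|h\|_{H_k}$ or from a closed graph argument. Your Step 2 and the closed-graph alternative are fine once Step 1 is available.

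The genuine gap is exactly at the point you flag as the main obstacle, and the resolution you offer does not hold. The Cauchy-net argument for the difference quotients $t^{-1}(k_{p+te_j}-k_p)$ needs existence and continuity of the mixed partials $\partial_x^\alpha\partial_y^\alpha k$ near the diagonal, i.e.\ \emph{joint} regularity of $k$ on $X\times X$; you assert that this follows automatically from separate smoothness in each slot plus Hermitian symmetry, but no argument is given, and in the smooth category the implication is false. For instance, on $\mathbb{R}$ let $\varphi(0)=0$ and $\varphi(t)=\rho\,e^{i(\cdot)/t}\in L^2(\mathbb{R})$ for $t\neq0$, with $\rho$ a fixed nonzero real bump function, and set $k(s,t):=\langle\varphi(s),\varphi(t)\rangle_{L^2}$. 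This is a positive definite kernel and every $k_p$ is $C^\infty$ (for $p,t\neq0$ one has $k(p,t)=\widehat{\rho^2}(1/t-1/p)$, and since $\widehat{\rho^2}$ is Schwartz each slice extends across $0$ with all derivatives vanishing), yet $k(t,t)\equiv\|\rho\|_{L^2}^2$ for $t\neq0$ while $k(0,0)=0$, so $k$ is not even jointly continuous; worse, $H_k$ contains the function $p\mapsto\int_a^b e^{-iu/p}\,du$ (the image of any $h$ with $h\rho=\chi_{[a,b]}$), which fails to be differentiable at $0$. So from the weak hypothesis ``each $k_p$ is smooth'' your Step 1 cannot be carried out by any soft symmetry argument; one must assume, as the cited theorems do, continuity of the mixed derivatives $\partial_x^\alpha\partial_y^\alpha k$ (equivalently, smoothness of $k$ on $X\times X$), and under that hypothesis your argument goes through verbatim. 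Only in the holomorphic case is the upgrade you invoke genuine: separate holomorphy in the second slot and anti-holomorphy in the first, via Hartogs' theorem on separate analyticity, give joint sesquiholomorphy, and there your proof is complete.
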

\begin{proof}
It follows from \cite[Theorem 2.6, 2.7]{SS}.
\end{proof}
It should be noted that RKHS associated with a smooth (resp. holomorphic) positive definite kernel is characterized as a Hilbert space realized as a subspace of $\mathcal{E}(X)$ (resp. $\mathcal{A}(X)$), in other words, we have the following proposition:
\begin{proposition}
\label{prop: 1to1correspond smooth}
Let $X$ be a smooth (resp. complex) manifold.
Then, the correspondence $k \mapsto H_k$ provides a one-to-one correspondence between smooth (holomorphic) positive definite kernels on $X$ and Hilbert spaces continuously included in $\mathcal{E}(X)$ (resp. $\mathcal{A}(X)$).
\end{proposition}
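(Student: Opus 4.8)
The plan is to establish the correspondence $k \mapsto H_k$ between smooth (resp. holomorphic) positive definite kernels and Hilbert spaces continuously included in $\mathcal{E}(X)$ (resp. $\mathcal{A}(X)$), by verifying that it is well-defined, injective, and surjective. That it is well-defined is precisely the content of Proposition \ref{prop: Hk subset CinftyX}: starting from a smooth (resp. holomorphic) positive definite kernel $k$, the space $H_k$ furnished by the Moore--Aronszajn theorem sits continuously inside $\mathcal{E}(X)$ (resp. $\mathcal{A}(X)$). So the substance of the proof is the two remaining points.

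For injectivity, I would argue that the kernel can be reconstructed from the Hilbert space together with its inclusion into $\mathcal{E}(X)$. If $H$ is a Hilbert space with continuous inclusion $\iota \colon H \hookrightarrow \mathcal{E}(X)$, then for each $p \in X$ the evaluation functional $h \mapsto h(p)$ factors as the composition of $\iota$ with the (continuous) point-evaluation on $\mathcal{E}(X)$, hence is a bounded linear functional on $H$; by the Riesz representation theorem it is given by the inner product against a unique element $k_p \in H$, and setting $k(p,q) := \langle k_q, k_p\rangle_H = k_q(p)$ recovers a positive definite kernel. Applying this to $H = H_{k}$ and using the reproducing property \eqref{reproducing property} shows the recovered kernel equals the original $k$; hence $k \mapsto H_k$ is injective. (One should note that $H_k$ should be understood together with its canonical inclusion, which is harmless since Proposition \ref{prop: Hk subset CinftyX} pins that inclusion down.)

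For surjectivity, given a Hilbert space $H$ with continuous inclusion $\iota \colon H \hookrightarrow \mathcal{E}(X)$ (resp. $\mathcal{A}(X)$), I would use the element $k_p \in H$ constructed above via Riesz representation and the associated kernel $k(p,q) = \langle k_q, k_p \rangle_H$. This $k$ is positive definite since the Gram matrix $(k(p_i,p_j))_{i,j} = (\langle k_{p_j}, k_{p_i}\rangle_H)_{i,j}$ is Hermitian positive semi-definite, and it is smooth (resp. holomorphic) because $k_p = \iota(k_p) \in \mathcal{E}(X)$ (resp. $\mathcal{A}(X)$) by construction. It remains to identify $H$ with $H_k$. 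For this I would invoke Remark \ref{rmk: characterization of RKHS} with $\varphi(p) := k_p$: the compatibility $\langle \varphi(p), \varphi(q)\rangle_H = k(p,q)$ holds by definition, and the linear span of $\{k_p : p \in X\}$ is dense in $H$ because its orthogonal complement consists of all $h \in H$ with $h(p) = \langle h, k_p\rangle_H = 0$ for every $p$, i.e. $h = 0$ as $H \subset \mathcal{E}(X)$ consists of genuine functions. Hence $\varphi(p) \mapsto k_p$ extends to a Hilbert space isomorphism $H \cong H_k$ respecting the inclusions, so $H$ is in the image of the correspondence.

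The main obstacle — really the only non-formal point — is bookkeeping about what ``one-to-one correspondence'' means here: Hilbert spaces continuously included in $\mathcal{E}(X)$ must be regarded as coming equipped with their inclusion maps (or, equivalently, up to the isometric isomorphisms that intertwine the inclusions), since otherwise an abstract Hilbert space could carry several inequivalent embeddings. Once this is fixed, injectivity and surjectivity both reduce to the Riesz-representation argument above together with Remark \ref{rmk: characterization of RKHS}, and no further analytic input is needed beyond Proposition \ref{prop: Hk subset CinftyX}. The holomorphic case is handled verbatim, replacing $\mathcal{E}(X)$ by $\mathcal{A}(X)$ throughout and noting that $k_p \in \mathcal{A}(X)$ automatically since $k_p = \iota(k_p)$.
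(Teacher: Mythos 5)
Your proposal takes essentially the same route as the paper: the paper's proof is precisely your surjectivity step (construct the inverse via Riesz representation of the evaluation functionals and conclude $H=H_k$ by uniqueness of the RKHS), and your explicit injectivity and density arguments only spell out what the paper leaves implicit. One bookkeeping slip worth fixing: with the conventions used here ($\langle h,k_p\rangle_H=h(p)$ and $k_p=k(p,\cdot)$) the recovered kernel must be $k(p,q):=\langle k_p,k_q\rangle_H=k_p(q)$, not $\langle k_q,k_p\rangle_H$; as you wrote it one has $k(p,\cdot)=\overline{k_p}$, which is anti-holomorphic in the second variable in the holomorphic case and also violates the compatibility $\langle\varphi(p),\varphi(q)\rangle_H=k(p,q)$ required to apply Remark \ref{rmk: characterization of RKHS} --- swapping the two arguments repairs the argument verbatim.
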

\begin{proof}
We construct an inverse correspondence of $k \mapsto H_k$.
Let $H \subset \mathcal{E}(X)$ be a Hilbert space such that the inclusion $H \hookrightarrow \mathcal{E}(X)$ is continuous.
For $p \in X$, the linear functional $h \mapsto h(p)$ on $H$ is continuous.
Thus, by the Riesz representation theorem, there exists $k_p \in H$ such that $\langle h, k_p\rangle_H = h(p)$.
Then, $k(p,p') := \langle k_p, k_{p'}\rangle_H$ defines a positive definite kernel on $X$.
Since $H \subset \mathcal{E}(X)$, $k$ is obviously a smooth function on $X\times X$, and we have $H=H_k$ by the uniqueness of RKHS.
\end{proof}
Since $H_k$ is a Hilbert space, we may identify $H_k'$ with $H_k$ via the Riesz representation theorem (note that this identification is antilinear).
We provide an explicit formula of an element of $\iota'(\mathcal{D}(X)_p)$ as follows:
\begin{proposition} \label{prop: iota prime D}
Let $X$ be a smooth manifold and $H_k$ be an RKHS associated with a smooth positive definite kernel $k$ on $X$.
We identify $H_k'$ with $H_k$ via the Riesz representation theorem.
Let $\iota\colon H_k \hookrightarrow \mathcal{E}(X)$ be the continuous inclusion.
Then, for any $p\in X$, $v \in \mathcal{D}(X)_p$ and $p_0 \in X$, we have
\[\iota'(v)(p_0) = \overline{ v(k_{p_0}) }.\]
\end{proposition}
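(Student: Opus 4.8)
The plan is to reduce the claim to the reproducing property together with the fact that differentiation of $k$ in one slot interchanges with the inner product. First I would unwind the definitions. By the Riesz identification $H_k' \cong H_k$, an element $w \in H_k'$ is identified with the vector $\tilde w \in H_k$ satisfying $w(h) = \langle h, \tilde w\rangle_{H_k}$ for all $h$; under this identification the claim $\iota'(v)(p_0) = \overline{v(k_{p_0})}$ reads: the vector in $H_k$ representing the functional $\iota'(v) = v \circ \iota$ on $H_k$, when evaluated at $p_0$, equals $\overline{v(k_{p_0})}$. Using the reproducing property \eqref{reproducing property}, evaluation at $p_0$ of any $g \in H_k$ is $\langle g, k_{p_0}\rangle_{H_k}$, so I must show $\langle \widetilde{\iota'(v)}, k_{p_0}\rangle_{H_k} = \overline{v(k_{p_0})}$, i.e. $\overline{\langle k_{p_0}, \widetilde{\iota'(v)}\rangle_{H_k}} = \overline{v(k_{p_0})}$, which is exactly $\iota'(v)(k_{p_0}) = v(\iota(k_{p_0})) = v(k_{p_0})$ after recalling $\iota'(v) = v \circ \iota$ and that $\iota$ is the inclusion. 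So at the level of a fixed argument the statement is essentially a tautology once the Riesz identification is spelled out; the only genuine content is checking that the antilinearity of the Riesz map produces exactly the complex conjugate appearing in the formula and nothing more.

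The one point that deserves care — and which I expect to be the main (mild) obstacle — is verifying that $v(k_{p_0})$ is meaningful and behaves correctly, since $v \in \mathcal{D}(X)_p = \delta_{p,\phi}(\mathcal{D}(\mathbb{R}^d))$ is a differential functional at the point $p$, and we are feeding it the function $q \mapsto k(p_0, q)$, which lies in $\mathcal{E}(X)$ by smoothness of $k$; so $v(k_{p_0}) = D\big(k(p_0, \phi^{-1}(\cdot))\big)(\phi(p))$ for the corresponding $D \in \mathcal{D}(\mathbb{R}^d)$, and this is finite. To see that $\iota'(v)$, a priori just an element of $H_k'$, is represented by the honest smooth function $p_0 \mapsto \overline{v(k_{p_0})}$, I would argue as follows: write $v = \delta_{p,\phi}(D)$ with $D = \sum_{\alpha} c_\alpha \partial_x^\alpha$ a finite sum; by linearity it suffices to treat $D = \partial_x^\alpha$. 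Then for $h \in H_k$,
\begin{align*}
\iota'(v)(h) = v(h) = \partial_x^\alpha\big(h \circ \phi^{-1}\big)(\phi(p)).
\end{align*}
On the other hand, by the reproducing property $h(q) = \langle h, k_q\rangle_{H_k}$; differentiating under the inner product in the local coordinate (this interchange is justified exactly as in the proof of Proposition \ref{prop: Hk subset CinftyX}, i.e. \cite[Theorems 2.6, 2.7]{SS}, where it is shown that such derivatives of $h$ are continuous linear functionals on $H_k$), one gets $\partial_x^\alpha(h\circ\phi^{-1})(\phi(p)) = \big\langle h,\, \partial_{\bar x}^\alpha\big(k_{\phi^{-1}(\cdot)}\big)(\phi(p))\big\rangle_{H_k}$, where the derivative on the right lands on the second (antiholomorphic/second-slot) variable of $k$ because the inner product is conjugate-linear there. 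Hence the Riesz representative of $\iota'(v)$ is the vector $u_v := \partial_{\bar x}^\alpha\big(k_{\phi^{-1}(\cdot)}\big)(\phi(p)) \in H_k$, and evaluating it at $p_0$ via the reproducing property again,
\begin{align*}
\iota'(v)(p_0) = u_v(p_0) = \langle u_v, k_{p_0}\rangle_{H_k} = \overline{\langle k_{p_0}, u_v\rangle_{H_k}} = \overline{\,\partial_x^\alpha\big(k_{p_0}\circ\phi^{-1}\big)(\phi(p))\,} = \overline{v(k_{p_0})},
\end{align*}
which is the claim.

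In summary, the steps in order are: (1) reduce to $v = \delta_{p,\phi}(\partial_x^\alpha)$ by linearity; (2) invoke the reproducing property and the differentiation-under-the-inner-product argument from \cite{SS} to identify the Riesz representative $u_v$ of $\iota'(v)$ explicitly; (3) evaluate $u_v$ at $p_0$ using the reproducing property once more and read off the conjugate. The only thing requiring attention is the bookkeeping of which variable of $k$ the derivative acts on and the resulting complex conjugation from the antilinearity of the Riesz identification — everything else is formal. Note also that this simultaneously re-proves $u_v \in H_k$, so no separate regularity check is needed beyond what Proposition \ref{prop: Hk subset CinftyX} already supplies.
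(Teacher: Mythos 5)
Your proposal is correct and its first paragraph is essentially the paper's own proof: the reproducing property gives $\iota'(v)(p_0)=\langle \iota'(v),k_{p_0}\rangle_{H_k}$, and the antilinearity of the Riesz identification together with $\iota'(v)=v\circ\iota$ yields the complex conjugate $\overline{v(k_{p_0})}$. The additional second step, where you identify the Riesz representative explicitly by differentiating the kernel under the inner product, is harmless but unnecessary for the statement, since the formula already follows from the tautological computation you carried out first.
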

\begin{proof}
In fact, by definition, we have
\begin{align*}
\iota'(v)(p_0) 
  &= \langle \iota'(v), k_{p_0} \rangle_{H_k},\\
  &= \overline{v(k_{p_0})}.
\end{align*}
\end{proof}

\subsection{Positive definite kernels defined by a formal power series} \label{subsec: pdf fps}
Here, we consider a positive definite kernel defined by a formal power series.
It covers various typical positive definite kernels and provides a natural generalization of those in \cite{SS17, DKL17}.
In addition, in some particular cases, it coincides with a Bergman kernel on an open domain of $\mathbb{C}^d$, and thus provides quasi-Banach spaces.
We explain the Fock spaces treated in numerous works.

\subsubsection{RKHS in a general setting}
Let $\Phi(z)=\sum_{\alpha}c_\alpha z^\alpha $ be a holomorphic function and $\Omega \subset \mathbb{C}^d$ be an open neighborhood of $0$.  Assume that for $z \in \Omega$, the series 
\begin{align*}
  \Phi(|z_1|^2, \cdots, |z_d|^2) = \sum_{n \in \mathbb{Z}^d_{\ge 0}}^\infty c_\alpha |z|^{2\alpha},
\end{align*}
absolutely converges and
\begin{align}
  c_{\alpha} \ge 0 \text{ for all }\alpha \in \mathbb{Z}^{d}_{\ge 0}. \label{positive coefficient}
\end{align}
 Then, we can define a holomorphic positive definite kernel on $\Omega$ by 
\begin{align}
  k(z,w) := \Phi(\overline{z_1}w_1,\dots,\overline{z_d} w_d ). \label{kernel from fps}
\end{align}
\begin{remark}
  The condition \eqref{positive coefficient} is actually a necessary and sufficient condition for the positive definiteness of $k(z,w)$ defined as in \eqref{kernel from fps}, namely, 
  for any holomorphic function on $\Omega$, if $k(z,w) := \Phi(\overline{z_1}w_1,\dots,\overline{z_d} w_d )$ determines a positive definite kernel, then the coefficients $c_{n_1, \dots, n_d}$ are non-negative.
  In fact, let $\varepsilon >0$ be a sufficiently small positive number and ${\bf e}(\theta) := (\varepsilon {\rm e}^{in_1\theta}, \dots, \varepsilon{\rm e}^{in_d\theta})$.
  Since $k$ is positive definite, we see that 
  \[
    c_{n_1,\dots, n_d} 
    = \frac{1}{4\pi^2 \varepsilon^{2\sum_jn_j}}\int_0^{2\pi} \int_0^{2\pi} {\rm e}^{i \sum_j n_j(\theta-\phi)}k({\bf e}(\theta), {\bf e}(\phi)) {\rm d}\theta {\rm d}\phi 
    \ge 0.
  \]
\end{remark}
Let
\begin{align}
  \mathcal{I}_\Phi := \{n \in \mathbb{Z}^d_{\ge 0} : c_n \neq 0 \}.
\end{align}
Then, we consider the following assumption on $\mathcal{I}_\Phi$:
\begin{assumption} \label{ass: zariski dense condition}
The set $\mathcal{I}_\Phi$ is an infinite set, and for any nonemptyse proper subset $I \subsetneq \{1,\dots, d\}$ and element $m = (m_i)_{i \in I} \in \mathbb{Z}^{I}_{ \ge 0}$, there exist infinitely many $n = (n_i)_{i=1}^d \in \mathcal{I}_\Phi$ such that $n_i = m_i$ for $i \in I$.
\end{assumption}
  For example, $\Phi(z) = \phi(z_1+ \dots + z_d)$ for a transcendental entire function $\phi$ satisfies this assumption.
It provides a positive definite kernel treated in \cite{SS17, DKL17}.  
Bergman kernels on the ball or polydisc also satisfy this assumption.

Then, we have the following proposition:
\begin{proposition} \label{prop: condition 2 for fps kernels}
The notation is as above.
  Then, for any $p \in \Omega$ and $\alpha \in \mathbb{Z}^d_{\ge 0}$, we have
\begin{align*}
  \iota'\left( \delta_p^{\rm hol}\big( \partial_{z}^\alpha \big) \right)(w) 
  =  w^\alpha \big(\partial_{z}^\alpha \Phi\big)(\overline{p_1}w_1, \dots, \overline{p_d} w_d ).
\end{align*}
Here, we regard $H_k' = H_k$ via the Riesz representation theorem (via an antilinear isomorphism).
  Moreover, if the set $\mathcal{I}_\Phi$ satisfies Assumption \ref{ass: zariski dense condition}, then for any $p\neq 0$, $\iota'|_{\mathcal{D}(X)_p}$ is injective (and thus $\kappa_p^n$ is injective for $n\ge0$).
 In particular, the condition \eqref{condition 2} in Theorem \ref{thm: stability of dynamics complex case} holds for any holomorphic map $f$ on $\Omega$.
\end{proposition}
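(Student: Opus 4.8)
The plan is to treat the two assertions separately: the explicit formula for $\iota'(\delta_p^{\rm hol}(\partial_z^\alpha))$ is obtained by a direct chain-rule computation, and the injectivity of $\iota'|_{\mathcal{D}^{\rm hol}(\Omega)_p}$ is then reduced, after expanding $\iota'(v)$ into a power series, to a Zariski-density property of $\mathcal{I}_\Phi$ extracted from Assumption~\ref{ass: zariski dense condition}.

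For the formula, I would invoke Proposition~\ref{prop: iota prime D} (whose proof applies verbatim in the holomorphic setting): under the Riesz identification $H_k' \cong H_k$ one has $\iota'(v)(w) = \overline{v(k_w)}$ for all $v \in \mathcal{D}^{\rm hol}(\Omega)_p$ and $w \in \Omega$. Specializing to $v = \delta_p^{\rm hol}(\partial_z^\alpha)$, so that $v(h) = (\partial_z^\alpha h)(p)$, and using $k_w(z) = \Phi(\overline{w_1}z_1, \dots, \overline{w_d}z_d)$ together with the chain rule, one gets $v(k_w) = \overline{w}^\alpha (\partial_z^\alpha\Phi)(\overline{w_1}p_1, \dots, \overline{w_d}p_d)$. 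Since the Taylor coefficients of $\Phi$, and hence of $\partial_z^\alpha\Phi$, are nonnegative reals by \eqref{positive coefficient}, passing to the complex conjugate turns each argument $\overline{w_i}p_i$ into $\overline{p_i}w_i$ and pulls $w^\alpha$ outside, giving $\iota'(v)(w) = w^\alpha(\partial_z^\alpha\Phi)(\overline{p_1}w_1, \dots, \overline{p_d}w_d)$. This step is pure bookkeeping.

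For the injectivity, write a general element of $\mathcal{D}^{\rm hol}(\Omega)_p$ as a finite sum $v = \sum_\alpha a_\alpha \delta_p^{\rm hol}(\partial_z^\alpha)$. Expanding $\iota'(v)$ into its power series at $0$ via the formula and $\partial_z^\alpha\Phi(z) = \sum_{\gamma \ge \alpha} c_\gamma \tfrac{\gamma!}{(\gamma-\alpha)!}z^{\gamma-\alpha}$, the coefficient of $w^\gamma$ equals $c_\gamma \sum_{\alpha \le \gamma} a_\alpha \tfrac{\gamma!}{(\gamma-\alpha)!}\overline{p}^{\gamma-\alpha}$, so $\iota'(v) = 0$ forces $\sum_{\alpha\le\gamma} a_\alpha\tfrac{\gamma!}{(\gamma-\alpha)!}\overline{p}^{\gamma-\alpha} = 0$ for every $\gamma \in \mathcal{I}_\Phi$. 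Put $I := \{ j : p_j \neq 0 \}$, which is nonempty since $p\neq 0$; for $j\notin I$ the factor $\overline{p_j}^{\gamma_j-\alpha_j}$ vanishes unless $\gamma_j = \alpha_j$. Fixing the off-$I$ coordinates $(\gamma_j)_{j\notin I}$ at an arbitrary value $(m_j)_{j\notin I}$, dividing by $\prod_{i\in I}\overline{p_i}^{\gamma_i}$, and using that $\gamma_i \mapsto \gamma_i(\gamma_i-1)\cdots(\gamma_i-\alpha_i+1)$ is a polynomial of degree $\alpha_i$ with leading coefficient $1$, the surviving relation says that a fixed polynomial in $(\gamma_i)_{i\in I}$, whose coefficients in the falling-factorial basis are the linear forms $a_\alpha\prod_{i\in I}\overline{p_i}^{-\alpha_i}$, vanishes at every $\gamma\in\mathcal{I}_\Phi$ with $(\gamma_j)_{j\notin I} = (m_j)_{j\notin I}$.

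The crux — and the only step I expect to be genuinely delicate — is to upgrade the ``infinitely many such $\gamma$'' furnished by Assumption~\ref{ass: zariski dense condition} to Zariski density in $\mathbb{C}^I$ of that set of lattice points. I would prove this by induction on $|I|$: freeze all coordinates of $I$ but one using Assumption~\ref{ass: zariski dense condition} for a proper subset containing $\{1,\dots,d\}\setminus I$ (a genuinely nonempty proper subset precisely because $p\neq 0$; the degenerate case $I = \{1,\dots,d\}$ is handled by taking $I' = \{1,\dots,d-1\}$ when $d\ge2$, or directly when $d=1$), so that the remaining one-variable polynomial vanishes at infinitely many integers and hence identically, whence its coefficients — polynomials vanishing on a full integer lattice — are zero. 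Once the polynomial in $(\gamma_i)_{i\in I}$ vanishes identically, linear independence of the falling-factorial products forces each $a_\alpha\prod_{i\in I}\overline{p_i}^{-\alpha_i} = 0$, i.e. $a_\alpha = 0$; letting $(m_j)_{j\notin I}$ vary gives $v=0$, so $\iota'|_{\mathcal{D}^{\rm hol}(\Omega)_p}$ is injective. Injectivity of $\kappa_p^n$ follows immediately (if $\iota'(v)\in\iota'(\mathcal{D}_{n-1}^{\rm hol}(\Omega)_p)$ then $\iota'(v-v')=0$ for some $v'\in\mathcal{D}_{n-1}^{\rm hol}(\Omega)_p$, whence $v = v'$), and then at every periodic point $p\neq 0$ of every holomorphic self-map $f$ of $\Omega$ the second assumption of Theorem~\ref{thm: stability of dynamics complex case} holds trivially, since ${\rm Ker}(\kappa_p^{n,{\rm hol}}) = 0$ for all $n\ge 0$.
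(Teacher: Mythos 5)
Your proof is correct, and its first half coincides with the paper's: both derive the displayed formula from Proposition \ref{prop: iota prime D} together with the chain rule and the fact that the coefficients $c_\alpha$ are nonnegative reals, so that conjugation turns $\overline{w_i}p_i$ into $\overline{p_i}w_i$. Where you genuinely diverge is the injectivity step. The paper argues by induction on the dimension $d$: writing $Q=t_d^rQ_0$ with $Q_0(t_1,\dots,t_{d-1},0)\neq 0$, dividing the identity \eqref{to prove 0426} by $w_d^r$, setting $w_d=0$, and replacing $\Phi$ by $\Psi=\partial_{z_d}^r\Phi(\cdot,0)$, which again satisfies Assumption \ref{ass: zariski dense condition}; its base case $d=1$ is exactly your falling-factorial computation. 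You instead extract Taylor coefficients at $0$ in all variables at once: each $\gamma\in\mathcal{I}_\Phi$ yields the relation $\sum_{\alpha\le\gamma}a_\alpha\tfrac{\gamma!}{(\gamma-\alpha)!}\overline{p}^{\gamma-\alpha}=0$; you separate the coordinates where $p$ vanishes (where only $\alpha_j=\gamma_j$ survives) from $I=\{j:p_j\neq0\}$, and you kill the resulting polynomial in $(\gamma_i)_{i\in I}$ by freezing all but one coordinate via Assumption \ref{ass: zariski dense condition} (the frozen set $\{1,\dots,d\}\setminus\{i_0\}$ is a nonempty proper subset when $d\ge2$, and $d=1$ is immediate), then using that a polynomial vanishing on a full integer lattice is zero and that the falling-factorial products are linearly independent. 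Both arguments ultimately rest on the same two ingredients---the freezing clause of the Assumption and ``a one-variable polynomial vanishing at infinitely many integers is identically zero''---but your version avoids the induction on $d$ and the auxiliary kernel $\Psi$, and it treats the vanishing coordinates of $p$ uniformly (the paper only writes out the case $p_1\neq0$ and leaves the rest as ``similar''), at the cost of heavier multi-index bookkeeping. One cosmetic remark: under the antilinear Riesz identification the coefficients entering your relations are $\overline{a_\alpha}$ rather than $a_\alpha$ (compare the paper's $\overline{b_\alpha}$), which of course does not affect the injectivity conclusion.
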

\begin{proof}
The first formula follows from Proposition \ref{prop: iota prime D}.
We prove the second statement.
Let $p \in \mathbb{C}^d \setminus \{0\}$.
Since for any $Q \in \mathbb{C}[t_1,\dots, t_d]$,
\[
\iota'(\delta_{p}^{\rm hol}(Q(\partial_z)) = \sum_{\alpha} \overline{b_\alpha} w^\alpha (\partial_z^\alpha \Phi)(\overline{p_1}w_1,\dots, \overline{p_d}w_d),
\] 
it suffices to show that for any $\Phi$ satisfying Assumption \ref{ass: zariski dense condition} and $Q(t) = \sum_{\alpha} b_\alpha t^\alpha \in \mathbb{C}[t_1,\dots, t_d]$, 
\begin{align}
 \sum_{\alpha} b_\alpha w^\alpha (\partial_z^\alpha \Phi)(\overline{p_1}w_1,\dots, \overline{p_d}w_d) = 0
\label{to prove 0426}
\end{align} 
implies $b_\alpha = 0$.
We prove it by induction of $d$.
In the case of $d=1$, \eqref{to prove 0426} is equivalent to
\[ c_m p_1^m \sum_{\alpha} b_\alpha m (m - 1) \cdots (m- \alpha+1) = 0\]
for all $m \ge \alpha$.
Since $p_1 \neq 0$ and $c_m \neq 0$ for infinitely many $m$ by Assumption \ref{ass: zariski dense condition}, we have $b_\alpha = 0$.
We next consider the case of $d>1$.
We only treat the case of $p_1 \neq 0$, and other cases are proved similarly.
Suppose $Q \neq 0$.
Then, there exists $Q_0 \in \mathbb{C}[t_1, \dots, t_d]$ such that $Q_0(t_1,\dots, t_{d-1}, 0) \neq 0$ and $Q = t_d^r Q_0$.
Let $Q_0 := \sum_\alpha b'_\alpha t^\alpha$.
Then, we have
\begin{align*}
w_d^r  \sum_{\alpha} b_\alpha' w^\alpha (\partial_z^\alpha (\partial_{z_d}^r \Phi))(\overline{p_1}w_1,\dots, \overline{p_d}w_d) 
& =  \sum_{\alpha} b_\alpha w^\alpha (\partial_z^\alpha \Phi)(\overline{p_1}w_1,\dots, \overline{p_d}w_d) \\
& = 0.
\end{align*}
Thus, we see that
\[\sum_{\alpha} b_\alpha' w^\alpha (\partial_z^\alpha (\partial_{z_d}^r\Phi))(\overline{p_1}w_1,\dots, \overline{p_d}w_d) = 0.\] 
Let $\Psi(z_1, \dots, z_{d-1}) := \partial_{z_d}^r\Phi(z_1,\dots, z_{d-1}, 0)$.
We note that $\Psi$ also satisfies Assumption \ref{ass: zariski dense condition}.
Then, by substituting $0$ for $w_d$ in the above formula, we have
\[\sum_{\substack{\alpha \\ \alpha_d = 0}} b_\alpha' w^\alpha (\partial_z^\alpha \Psi)(\overline{p_1}w_1,\dots, \overline{p_{d-1}}w_{d-1}) = 0.\] 
This equality corresponds to \eqref{to prove 0426} in the case $\Phi = \Psi$ and $Q=Q_0(t_1,\dots, t_{d-1},0)$.
Therefore, by induction hypothesis, $b'_\alpha = 0$ for any $\alpha = 0$ with $\alpha_d \neq 0$, which contradicts the fact that $Q_0(t_1,\dots,t_{d-1},0) \neq 0$.
\end{proof}

\subsubsection{Fock spaces} \label{subsubsec: fock space}
Let ${\rm A}$ be a Lebesgue measure on $\mathbb{C}^d$ via the isomorphism $ \mathbb{R}^d \times \mathbb{R}^d \cong \mathbb{C}^d;(x,y) \mapsto x+iy$.
For $\alpha > 0$, let 
\[\Phi(z) := \left( \frac{\alpha}{\pi} \right)^d {\rm e}^{\alpha z},\]
and define $k$ as in \eqref{kernel from fps}.
For $0 < q \le \infty$ and $h \in \mathcal{A}(\mathbb{C}^d)$, we define
\[ \| h \|_{q,\alpha} := 
\begin{cases}
  \displaystyle \int_{\mathbb{C}^{d}} |h(z)|^q {\rm e}^{-\alpha q |z|^2/2} {\rm dA}(z) <\infty &\text{ if }q<\infty, \\
  \displaystyle \sup_{z \in \mathbb{C}^d} |h(z)| {\rm e}^{-\alpha |z|^2/2} & \text{ if }q=\infty.
\end{cases}
\] 
Then, for $0 < q \le \infty$, we define
\[{F}_\alpha^q := \left\{ 
h \in \mathcal{A}(\mathbb{C}^d) :
\|h \|_{q, \alpha} < \infty
\right\}. \]
Then, for any $h \in {F}_\alpha^q$ and $p \in \mathbb{C}^d$, we have the following formula (see, for example, \cite[Theorem 8]{LW21}):
\[ h(p) = \int_{\mathbb{C}^d} h(z) \overline{k_p(z)} {\rm e}^{-\alpha |z|^2} {\rm dA}(z). \]
In particular,  ${F}_\alpha^q$ is a continuously included subspace of $\mathcal{A}(X)$ and for any $Q \in \mathbb{C}[t_1,\dots, t_d]$,
\begin{align}
Q(\partial_z)h(p) &= \int_{\mathbb{C}^d} h(z) \overline{\overline{Q}(\alpha z)k_p(z)} {\rm e}^{-\alpha |z|^2} {\rm dA}(z). \label{iotaprime for fock}
\end{align}
In particular, $\iota'(\delta_p(Q(\partial_z))) \in (F_\alpha^q)'$ is represented by the function $\overline{Q}(\alpha z)k_p(z)$ via the integral on the right hand side of \eqref{iotaprime for fock}, and we have the following proposition:
\begin{proposition} \label{prop: condition 2 for fock spaces}
  Let $\alpha > 0$ and $0< q \le \infty$.
 The linear map $\kappa_p^n$ is injective for the Fock space ${F}_\alpha^q$ for any $p \in  \mathbb{C}^d$ and $n \ge 0$
\end{proposition}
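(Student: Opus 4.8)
The plan is to reduce the statement to the injectivity of the single restriction map $\iota'|_{\mathcal{D}^{\rm hol}(\mathbb{C}^d)_p}\colon \mathcal{D}^{\rm hol}(\mathbb{C}^d)_p \to (F_\alpha^q)'$, and then to establish that injectivity directly by testing against polynomials. For the reduction, recall that $\kappa_p^n$ is, by construction, the surjection induced on the subquotients $\mathcal{D}^{\rm hol}_n(\mathbb{C}^d)_p/\mathcal{D}^{\rm hol}_{n-1}(\mathbb{C}^d)_p$ by $\iota'$. Hence, if $\iota'$ is injective on all of $\mathcal{D}^{\rm hol}(\mathbb{C}^d)_p$, then $\kappa_p^n$ is injective for every $n\ge 0$: given $v\in\mathcal{D}^{\rm hol}_n(\mathbb{C}^d)_p$ with $\iota'(v)\in\iota'(\mathcal{D}^{\rm hol}_{n-1}(\mathbb{C}^d)_p)$, pick $w\in\mathcal{D}^{\rm hol}_{n-1}(\mathbb{C}^d)_p$ with $\iota'(v)=\iota'(w)$; then $\iota'(v-w)=0$ forces $v=w\in\mathcal{D}^{\rm hol}_{n-1}(\mathbb{C}^d)_p$, so the class of $v$ vanishes. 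Thus it suffices to show: if $Q=\sum_\beta b_\beta t^\beta\in\mathbb{C}[t_1,\dots,t_d]$ and $\iota'(\delta_p^{\rm hol}(Q(\partial_z)))=0$ in $(F_\alpha^q)'$, then $Q=0$.

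Next, unwinding the definition of $\delta_p^{\rm hol}$ (with the standard coordinate on $\mathbb{C}^d$) and of $\iota'$, the functional $\iota'(\delta_p^{\rm hol}(Q(\partial_z)))$ is simply $h\mapsto \big(Q(\partial_z)h\big)(p)$ on $F_\alpha^q$; equivalently, one may use the integral representation \eqref{iotaprime for fock} with representing function $\overline{Q}(\alpha z)k_p(z)$, but the bare definition already suffices. The crucial observation is that $F_\alpha^q$ contains every polynomial: for any multi-index $\gamma$, the function $z\mapsto (z-p)^\gamma$ has at most polynomial growth, so $|(z-p)^\gamma|\,{\rm e}^{-\alpha|z|^2/2}$ is bounded on $\mathbb{C}^d$ and, for $q<\infty$, $\int_{\mathbb{C}^d}|(z-p)^\gamma|^q{\rm e}^{-\alpha q|z|^2/2}\,{\rm dA}(z)<\infty$; hence $(z-p)^\gamma\in F_\alpha^q$ for all $0<q\le\infty$.

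Finally I would feed these test functions into the functional. A direct computation with the product rule gives $\big(Q(\partial_z)(z-p)^\gamma\big)(p)=\gamma!\,b_\gamma$, because $\partial_z^\beta(z-p)^\gamma$ vanishes at $z=p$ unless $\beta=\gamma$. Therefore $\iota'(\delta_p^{\rm hol}(Q(\partial_z)))=0$ implies $b_\gamma=0$ for every $\gamma$, i.e.\ $Q=0$, which is exactly the injectivity of $\iota'|_{\mathcal{D}^{\rm hol}(\mathbb{C}^d)_p}$; combined with the first paragraph, $\kappa_p^n$ is injective for all $p\in\mathbb{C}^d$ and all $n\ge 0$. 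I do not anticipate a genuine obstacle here: the only items requiring care are the (routine) verification that all polynomials lie in $F_\alpha^q$ for every $q$, and the elementary bookkeeping in the reduction to injectivity of $\iota'$ on $\mathcal{D}^{\rm hol}(\mathbb{C}^d)_p$. It is worth emphasizing that, in contrast with Proposition \ref{prop: condition 2 for fps kernels}, no Zariski-density hypothesis (Assumption \ref{ass: zariski dense condition}) is needed and the case $p=0$ is included as well, precisely because the Fock space is already rich enough to contain all polynomials centered at $p$.
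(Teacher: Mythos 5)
Your proof is correct. It shares the paper's first step---reducing injectivity of every $\kappa_p^n$ to injectivity of $\iota'$ restricted to $\mathcal{D}^{\rm hol}(\mathbb{C}^d)_p$---but takes a different route at the key point. The paper invokes the integral representation \eqref{iotaprime for fock}, identifying $\iota'(\delta_p(Q(\partial_z)))$ with the representing function $\overline{Q}(\alpha z)k_p(z)$, and concludes from the fact that $k_p$ vanishes nowhere; implicitly this uses that a nonzero representing function of this type induces a nonzero functional on $F_\alpha^q$. You instead evaluate the functional $h\mapsto\big(Q(\partial_z)h\big)(p)$ directly on the test functions $(z-p)^\gamma$, after checking that all polynomials belong to $F_\alpha^q$ for every $0<q\le\infty$, and read off $\gamma!\,b_\gamma=0$ for each $\gamma$. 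Your argument is more elementary and self-contained: it bypasses the representing-function step entirely (a point the paper leaves tacit, and which is mildly delicate in the non-locally-convex range $q<1$), at the modest cost of verifying membership of polynomials in the Fock space; the paper's argument is shorter because \eqref{iotaprime for fock} is already in place, and it has the side benefit of exhibiting the elements of $\iota'(\mathcal{D}^{\rm hol}(X)_p)$ concretely as functions. Your closing remark is also accurate: unlike Proposition \ref{prop: condition 2 for fps kernels}, no analogue of Assumption \ref{ass: zariski dense condition} is needed and $p=0$ causes no trouble, precisely because the monomials centered at $p$ are available as test functions.
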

\begin{proof}
It suffices to show that $\iota'|_{\mathcal{D}^{\rm hol}(X)_p}$ is injective.
Since $\iota'(\delta_p(Q(\partial_z))) = 0 $ if and only if $\overline{Q}(\alpha z)k_p(z)=0$.
Since $k_p$ has zero nowhere, we see that $\overline{Q}(\alpha z)k_p(z) = 0$ if and only if $Q = 0$.
\end{proof}

\subsection{Shift-invariant kernels on the Euclidean spaces} \label{ex: shift invariant kernels Rd}
In this subsection, we introduce several RKHSs and Banach spaces defined by continuous positive definite functions, a Fourier transform of finite Borel measure on the Euclidean space.
We first provide a general example of RKHS and then give a Banach space where the base measure is absolutely continuous.

\subsubsection{RKHS for continuous positive definite functions}
Let $\mu$ be a Borel measure on $\mathbb{R}^d$ such that for any non-negative integer $n \ge 0$,
\[\int_{\mathbb{R}^d} |\xi|^n {\rm d}\mu(\xi) < \infty.\]
We employ $\xi = (\xi_1,\dots, \xi_d)$ to describe the variables of functions in $L^2(\mu)$.
We define a smooth positive definite kernel on $\mathbb{R}^d$ by 
\[k(x,y) := \widehat{\mu}(x-y) := \int {\rm e}^{i(x-y)\cdot \xi}{\rm d}\mu(\xi). \] 
Here for $x,y \in \mathbb{C}^d$, we denote ${}^txy$ by $x\cdot y$.

We note that the correspondence $k_p \mapsto {\rm e}^{i p\cdot \xi}$ induces an isomorphism (see Remark \ref{rmk: characterization of RKHS}). 
\[\rho: H_k \cong L^2(\mu); k_p \mapsto {\rm e}^{i p \cdot \xi}.\] 
Then, we have the following proposition:
\begin{proposition} \label{prop: condition 2 shift invariant}
The notation is as above, and we regard $H_k'$ as $H_k$ via the Riesz representation theorem (via an antilinear isomorphism).
  Then, for any $Q \in \mathbb{C}[t_1,\dots, t_d]$ and $p \in \mathbb{R}^d$, we have
  \begin{align}
\rho\iota'(\delta_p(Q(\partial_{x}))) = \overline{Q}(i\tilde{\xi}){\rm e}^{ip\cdot \xi},\label{prove sitaiyatu}
  \end{align}
where $\tilde{\xi}_j$ is the equivalence class of $\xi_j$ in $L^2(\mu)$.
In particular, if $\mathbb{C}[\xi_1,\dots, \xi_d] \hookrightarrow L^2(\mu)$, $\iota'|_{\mathcal{D}(X)_p}$ is injective for all $p \in \mathbb{R}^d$ (and $\kappa_p^n$ is injective for any $n\ge 0$), and the condition \eqref{condition 2} in Theorem \ref{thm: stability of dynamics} holds for any smooth function on $\mathbb{R}^d$.
\end{proposition}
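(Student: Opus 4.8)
The plan is to compute $\iota'(\delta_p(Q(\partial_x)))$ explicitly via the isomorphism $\rho: H_k \xrightarrow{\cong} L^2(\mu)$ and then read off injectivity from a density/linear-independence argument. First I would recall that by Proposition \ref{prop: iota prime D}, for $v \in \mathcal{D}(X)_p$ we have $\iota'(v)(p_0) = \overline{v(k_{p_0})}$, where the derivative in $v$ acts on the $p_0$-variable; equivalently $\iota'(v) \in H_k$ is characterized by $\langle h, \iota'(v)\rangle_{H_k} = \overline{v(h)}$ via the Riesz identification. To compute $v(k_{p_0})$ for $v = \delta_p(Q(\partial_x))$, I would differentiate $k_{p_0}(x) = \widehat{\mu}(x - p_0) = \int {\rm e}^{i(x-p_0)\cdot \xi}\,{\rm d}\mu(\xi)$ in the $x$-variable (differentiation under the integral sign is justified by the moment hypothesis $\int |\xi|^n\,{\rm d}\mu < \infty$), obtaining $(Q(\partial_x)k_{p_0})(p) = \int Q(i\xi)\,{\rm e}^{i(p - p_0)\cdot\xi}\,{\rm d}\mu(\xi)$. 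Taking the complex conjugate and recognizing the right-hand side as an inner product in $L^2(\mu)$ between ${\rm e}^{i p_0 \cdot \xi}$ and $\overline{Q}(i\xi){\rm e}^{ip\cdot\xi}$ gives, after tracking the antilinearity of $\rho$, the formula $\rho\iota'(\delta_p(Q(\partial_x))) = \overline{Q}(i\tilde\xi)\,{\rm e}^{ip\cdot\xi}$ in $L^2(\mu)$.

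Granting that formula, the injectivity of $\iota'|_{\mathcal{D}(X)_p}$ is immediate: if $Q(\partial_x) \mapsto 0$, then $\overline{Q}(i\tilde\xi){\rm e}^{ip\cdot\xi} = 0$ in $L^2(\mu)$; since ${\rm e}^{ip\cdot\xi}$ is a unit-modulus function it is nonvanishing $\mu$-a.e., so $\overline{Q}(i\tilde\xi) = 0$ in $L^2(\mu)$, i.e. the polynomial $\overline{Q}(i\xi)$ lies in the kernel of $\mathbb{C}[\xi_1,\dots,\xi_d] \to L^2(\mu)$. Under the hypothesis that this map is injective, $\overline{Q}(i\xi)$ is the zero polynomial, hence $Q = 0$. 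Since $\iota'|_{\mathcal{D}(X)_p}$ is injective, its restriction to each $\mathcal{D}_n(X)_p$ is injective, so $\iota'(\mathcal{D}_{n-1}(X)_p) = \iota'(\mathcal{D}_n(X)_p) \cap \ker(\text{nothing})$ — more precisely, $\ker \kappa_p^n$ is the image in the subquotient of $\{v \in \mathcal{D}_n(X)_p : \iota'(v) \in \iota'(\mathcal{D}_{n-1}(X)_p)\}$, and injectivity of $\iota'$ forces this to equal $\mathcal{D}_{n-1}(X)_p$, so $\ker \kappa_p^n = \{0\}$ for every $n \ge 0$. In particular condition \eqref{condition 2} of Theorem \ref{thm: stability of dynamics} holds trivially (the kernel being zero is contained in anything) for every smooth $f$, with $p$ an arbitrary fixed point of $f$ once we pass to the periodic setting.

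The only genuinely delicate point is the justification of differentiating $\widehat\mu(x-p_0) = \int {\rm e}^{i(x-p_0)\cdot\xi}\,{\rm d}\mu(\xi)$ under the integral to all orders, together with the bookkeeping of which variable is differentiated (the $p_0$ slot entering $k_{p_0}$) versus where $p$ is then substituted, and the two layers of complex conjugation coming from (i) Proposition \ref{prop: iota prime D} and (ii) the antilinearity of the Riesz identification $H_k' \cong H_k$ and of $\rho$. The differentiation is controlled by the dominating function $|\xi|^{|\alpha|}$, which is $\mu$-integrable by assumption, so this is routine; I would state it as such and spend the displayed computation only on pinning down the conjugates. The final claim about condition \eqref{condition 2} then follows formally as above, exactly as in the analogous Propositions \ref{prop: condition 2 for fps kernels} and \ref{prop: condition 2 for fock spaces}.
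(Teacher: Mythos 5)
Your overall route is the paper's: apply Proposition \ref{prop: iota prime D}, differentiate the kernel under the integral sign (justified by the moment condition), and identify the image under $\rho$ via density of the exponentials in $L^2(\mu)$; your treatment of the injectivity of $\iota'|_{\mathcal{D}(X)_p}$, of $\ker \kappa_p^n=\{0\}$, and of condition \eqref{condition 2} is correct and in fact more explicit than the paper's. However, the central computation as you wrote it does not land on \eqref{prove sitaiyatu}, because of a convention slip that then breaks your conjugation bookkeeping. With the paper's definitions, $k_{p_0}=k(p_0,\cdot)$ and $k(x,y)=\widehat{\mu}(x-y)$, so $k_{p_0}(x)=\widehat{\mu}(p_0-x)=\int {\rm e}^{i(p_0-x)\cdot\xi}\,{\rm d}\mu(\xi)$, not $\widehat{\mu}(x-p_0)$ as you have. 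With your version, $(Q(\partial_x)k_{p_0})(p)=\int Q(i\xi){\rm e}^{i(p-p_0)\cdot\xi}\,{\rm d}\mu$, whose conjugate is $\int \overline{Q}(-i\xi){\rm e}^{-ip\cdot\xi}{\rm e}^{ip_0\cdot\xi}\,{\rm d}\mu$; this is \emph{not} the inner product $\langle {\rm e}^{ip_0\cdot\xi}, \overline{Q}(i\tilde{\xi}){\rm e}^{ip\cdot\xi}\rangle_{L^2(\mu)}$ you claim (that inner product equals $\int Q(-i\xi){\rm e}^{-ip\cdot\xi}{\rm e}^{ip_0\cdot\xi}\,{\rm d}\mu$), and moreover $\iota'(v)(p_0)$ must be paired as $\langle \rho\iota'(v), {\rm e}^{ip_0\cdot\xi}\rangle_{L^2(\mu)}$, with the unknown in the linear slot. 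Carried through literally, your identifications produce a formula off by a conjugation of the coefficients of $Q$ (or requiring a reflection/symmetry of $\mu$), not \eqref{prove sitaiyatu}.

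The fix is exactly the paper's computation: with the correct $k_{p_0}$, differentiation in $x$ brings down $-i\xi$, so $v(k_{p_0})=\int Q(-i\xi){\rm e}^{i(p_0-p)\cdot\xi}\,{\rm d}\mu$, and Proposition \ref{prop: iota prime D} gives $\iota'(v)(p_0)=\int \overline{Q}(i\xi){\rm e}^{ip\cdot\xi}{\rm e}^{-ip_0\cdot\xi}\,{\rm d}\mu(\xi)$, which is precisely of the form $h(p_0)=\int\rho(h)(\xi){\rm e}^{-ip_0\cdot\xi}\,{\rm d}\mu(\xi)$ characterizing $\rho(h)$ (equivalently $\langle \rho(h),{\rm e}^{ip_0\cdot\xi}\rangle_{L^2(\mu)}$); density of the exponentials in $L^2(\mu)$ then yields \eqref{prove sitaiyatu}. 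Note also that $\rho$ is a linear unitary, not antilinear: all the antilinearity sits in the Riesz identification and is already absorbed into the conjugate appearing in Proposition \ref{prop: iota prime D}, so there is no further ``antilinearity of $\rho$'' to track. Once the formula is established in this form, your second paragraph (injectivity from $|{\rm e}^{ip\cdot\xi}|=1$ and $\mathbb{C}[\xi_1,\dots,\xi_d]\hookrightarrow L^2(\mu)$, hence $\ker\kappa_p^n=\{0\}$ and condition \eqref{condition 2}) goes through verbatim.
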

\begin{proof}
We first remark that for any $h\in H_k$, $\rho(h)$ is characterized by  
\begin{align*}
 h(x) = \int_{\mathbb{R}^d} \rho(h)(\xi)e^{-ix\cdot\xi}{\rm d}\mu(\xi) \label{characterization of the isom}
\end{align*}
for all $ x \in \mathbb{R}^d$.
Since $k(x,y) = \int {\rm e}^{i(x-y)\cdot\xi} {\rm d}\mu(\xi)$, by Lemma \ref{prop: iota prime D}, we have
\[\iota'(\delta_p(Q(\partial_{x})))(x) = \int_{\mathbb{R}^d} \overline{Q}(i\xi) {\rm e}^{ip\cdot\xi} {\rm e}^{-ix\cdot\xi} {\rm d}\mu(\xi).\] 
We have \eqref{prove sitaiyatu} according to the first remark. 
\end{proof}
If $\mu$ has a Zariski dense support (for example, $\mu$ is absolutely continuous), the natural map $\mathbb{C}[\xi_1,\dots, \xi_d] \rightarrow L^2(\mu)$ is injective.

If we additionally impose $\int \prod_j {\rm e}^{a_j|\xi_j|} {\rm d}\mu(\xi) < \infty$ for some $a = (a_1,\dots, a_d) \in \mathbb{R}^d_{>0}$, $k$ also define a positive definite kernel on a complex manifold $\mathbb{I}_a := \{ z=(z_j)_{j=1}^d \in \mathbb{C}^d  : |{\rm Im}(z_i)|<a_i\}$ of complex dimension $d$, where we set $k(z,w) := \widehat{\mu}(z-\overline{w})$ for $z,w \in \mathbb{I}_a$.
Then, we have a similar proposition as follows:
\begin{proposition} \label{prop: condition 2 shift invariant complex ver}
  Assume $\int \prod_j {\rm e}^{a_j |\xi_j|} {\rm d}\mu(\xi) < \infty$ for some $a = (a_1,\dots, a_d) \in \mathbb{R}^d_{>0}$.
We regard $H_k'$ as $H_k$ via the Riesz representation theorem (via an antilinear isomorphism).
  Then, for any $Q \in \mathbb{C}[t_1,\dots, t_d]$ and $p \in \mathbb{I}_a$,
  \[\rho\iota'(\delta_p(Q(\partial_{z}))) = \overline{Q}(i\tilde{\xi}) {\rm e}^{i\overline{p}\cdot \xi}.\]
  where $\tilde{\xi} = (\tilde{\xi}_j)_j$ and each $\tilde{\xi}_j$ is the equivalence class of $\xi_j$ in $L^2(\mu)$.
  In particular, if $\mathbb{C}[\xi_1,\dots, \xi_d] \hookrightarrow L^2(\mu)$, $\iota'|_{\mathcal{D}^{\rm hol}(\mathbb{I}_a)_p}$ is injective for all $p \in \mathbb{I}_a$ (and $\kappa_p^n$ is injective for any $n\ge0$), and the condition \eqref{condition 2} in Theorem \ref{thm: stability of dynamics} holds for any holomorphic maps on $\mathbb{I}_a$.
\end{proposition}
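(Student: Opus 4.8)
The plan is to follow the proof of Proposition~\ref{prop: condition 2 shift invariant} essentially verbatim, with the real variable $x$ replaced by the complex variable $z\in\mathbb{I}_a$; the sole new ingredient is the hypothesis $\int_{\mathbb{R}^d}\prod_j{\rm e}^{a_j|\xi_j|}\,{\rm d}\mu(\xi)<\infty$, which is exactly what is needed to run the convergence arguments that were automatic in the real case. Concretely, this hypothesis makes $k(z,w)=\widehat\mu(z-\overline w)$ a well-defined holomorphic positive definite kernel on $\mathbb{I}_a$ (positive definiteness by the same $\bigl|\sum_i c_i{\rm e}^{iz_i\cdot\xi}\bigr|^2\ge 0$ computation as in the real case, and holomorphy and continuity of the inclusion being already asserted above), it ensures ${\rm e}^{i\overline p\cdot\xi}\in L^2(\mu)$ for every $p\in\mathbb{I}_a$ so that $\rho\colon H_k\to L^2(\mu)$, $k_p\mapsto{\rm e}^{i\overline p\cdot\xi}$, is a well-defined unitary isomorphism via Remark~\ref{rmk: characterization of RKHS} (the functions ${\rm e}^{i\overline p\cdot\xi}$ span a dense subspace of $L^2(\mu)$ since a finite measure is determined by its Fourier transform), and, on any compact subset of $\mathbb{I}_a$ where $|{\rm Im}\,z_j|\le a_j-\varepsilon$, it provides a fixed $\mu$-integrable majorant $C|\xi|^{N}\prod_j{\rm e}^{(a_j-\varepsilon)|\xi_j|}$ for the integrands below, licensing differentiation under the integral sign.

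With this in place the argument proceeds exactly as in the real case. First I would record the characterizing identity for $\rho$: applying the unitary $\rho$ to the reproducing identity $h(z)=\langle h,k_z\rangle_{H_k}$ gives $h(z)=\int_{\mathbb{R}^d}\rho(h)(\xi)\,{\rm e}^{-i\overline z\cdot\xi}\,{\rm d}\mu(\xi)$ for $h\in H_k$; this holds for real $z$ by the identity used in the proof of Proposition~\ref{prop: condition 2 shift invariant}, and both sides are holomorphic in $z$ on $\mathbb{I}_a$, so it extends by analytic continuation. Next, combining Proposition~\ref{prop: iota prime D} with $k_{p_0}(z)=\int{\rm e}^{i(z-\overline{p_0})\cdot\xi}\,{\rm d}\mu(\xi)$ and differentiating $Q(\partial_z)$ under the integral sign gives $\iota'(\delta^{\rm hol}_p(Q(\partial_z)))(p_0)=\int_{\mathbb{R}^d}\overline Q(i\xi)\,{\rm e}^{i\overline p\cdot\xi}\,{\rm e}^{-i\overline{p_0}\cdot\xi}\,{\rm d}\mu(\xi)$, and reading this against the characterizing identity for $\rho$ yields the asserted formula $\rho\iota'(\delta^{\rm hol}_p(Q(\partial_z)))=\overline Q(i\tilde\xi)\,{\rm e}^{i\overline p\cdot\xi}$. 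Then, if $\mathbb{C}[\xi_1,\dots,\xi_d]\hookrightarrow L^2(\mu)$ and $\iota'(\delta^{\rm hol}_p(Q(\partial_z)))=0$, we obtain $\overline Q(i\tilde\xi)\,{\rm e}^{i\overline p\cdot\xi}=0$ in $L^2(\mu)$; since ${\rm e}^{i\overline p\cdot\xi}$ vanishes nowhere this forces $\overline Q(i\tilde\xi)=0$, hence $\overline Q=0$, hence $Q=0$, so $\iota'|_{\mathcal{D}^{\rm hol}(\mathbb{I}_a)_p}$ is injective. Consequently ${\rm Ker}(\kappa^{n,{\rm hol}}_p)=\{0\}$ for every $n$ (its kernel consists of classes $v+\mathcal{D}^{\rm hol}_{n-1}(\mathbb{I}_a)_p$ with $\iota'(v)\in\iota'(\mathcal{D}^{\rm hol}_{n-1}(\mathbb{I}_a)_p)$, which is $\{0\}$ by injectivity of $\iota'$ on the whole filtered space), so condition~\eqref{condition 2 complex} of Theorem~\ref{thm: stability of dynamics complex case} holds for every holomorphic self-map of $\mathbb{I}_a$; moreover, since for holomorphic $f$ every antiholomorphic direction already lies in ${\rm Ker}(\kappa^n_p)$, the condition~\eqref{condition 2} of Theorem~\ref{thm: stability of dynamics} holds as well.

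I do not expect a genuine conceptual obstacle: the content is essentially an exercise in transcribing the real-variable computation. The points demanding care are (i) the conjugation and sign bookkeeping — tracking $\overline p$ against $p$ in the exponents and the change from $Q(\partial_z)$ to $\overline Q(i\xi)$ (not $Q(i\xi)$) forced by the conjugation in Proposition~\ref{prop: iota prime D} — and (ii) checking at each step that the relevant integrand is dominated uniformly on compact subsets of $\mathbb{I}_a$; point (ii) is exactly where the strip $\mathbb{I}_a$, rather than all of $\mathbb{C}^d$, and the exponential-integrability hypothesis are essential, and is the only place that hypothesis does real work.
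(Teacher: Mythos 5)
Your overall plan is the right one and is exactly what the paper intends: the paper gives no separate proof of Proposition \ref{prop: condition 2 shift invariant complex ver}, presenting it as the complex analogue of Proposition \ref{prop: condition 2 shift invariant}, to be proved by the same computation (Proposition \ref{prop: iota prime D} plus the characterization of $\rho$), with the hypothesis $\int\prod_j{\rm e}^{a_j|\xi_j|}\,{\rm d}\mu<\infty$ supplying the domination needed for holomorphy and differentiation under the integral sign. Your identification of where that hypothesis does work, and your injectivity argument (a nowhere-vanishing exponential factor times $\overline{Q}$ of a polynomial vanishing in $L^2(\mu)$ forces $Q=0$, hence ${\rm Ker}(\kappa^{n,\rm hol}_p)=\{0\}$), are fine.

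However, the conjugation bookkeeping — the very point you flag as needing care — is not internally consistent as written. Your ``characterizing identity'' $h(z)=\int\rho(h)(\xi)\,{\rm e}^{-i\overline z\cdot\xi}\,{\rm d}\mu(\xi)$ cannot be correct: the right-hand side is antiholomorphic in $z$ while $h$ is holomorphic, so analytic continuation from $\mathbb{R}^d$ gives $h(z)=\int\rho(h)(\xi)\,{\rm e}^{-iz\cdot\xi}\,{\rm d}\mu(\xi)$ (no conjugate on $z$). Likewise your displayed formula $\iota'(\delta^{\rm hol}_p(Q(\partial_z)))(p_0)=\int\overline Q(i\xi)\,{\rm e}^{i\overline p\cdot\xi}{\rm e}^{-i\overline{p_0}\cdot\xi}\,{\rm d}\mu$ is antiholomorphic in $p_0$, which is impossible since $\iota'(\delta^{\rm hol}_p(Q(\partial_z)))\in H_k\subset\mathcal{A}(\mathbb{I}_a)$; it also does not follow from your own choice $k_{p_0}(z)=\int{\rm e}^{i(z-\overline{p_0})\cdot\xi}{\rm d}\mu$, which instead yields $\int\overline Q(-i\xi)\,{\rm e}^{-i\overline p\cdot\xi}{\rm e}^{ip_0\cdot\xi}\,{\rm d}\mu$, i.e.\ $\rho\iota'(\delta^{\rm hol}_p(Q(\partial_z)))=\overline Q(-i\tilde\xi)\,{\rm e}^{-i\overline p\cdot\xi}$ under the $\rho$ consistent with that kernel (namely $\rho(k_q)={\rm e}^{-i\overline q\cdot\xi}$). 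To land on the formula as stated in the proposition you must take the conventions obtained by holomorphically extending the real-case identities: $\rho(k_q)={\rm e}^{i\overline q\cdot\xi}$, $k_{p_0}(z)=\widehat\mu(\overline{p_0}-z)$, and $h(z)=\int\rho(h)(\xi){\rm e}^{-iz\cdot\xi}{\rm d}\mu$; then the computation via Proposition \ref{prop: iota prime D} gives exactly $\overline Q(i\tilde\xi)\,{\rm e}^{i\overline p\cdot\xi}$. (Part of the blame lies with the paper, whose formula $k(z,w)=\widehat\mu(z-\overline w)$ together with $k_p=k(p,\cdot)$ is antiholomorphic in the wrong variable; still, your three intermediate identities are mutually incompatible, and two errors cancel to produce the stated display.) These slips do not affect the ``in particular'' conclusions, but fix them before relying on the exact formula. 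Finally, your last sentence does not establish condition \eqref{condition 2} of Theorem \ref{thm: stability of dynamics}: elements of $\mathcal{D}_n(X)_p$ containing an antiholomorphic derivative do lie in ${\rm Ker}(\kappa^n_p)$, but condition \eqref{condition 2} requires them to lie in ${\rm Ker}({\rm gr}^n_{f_*^r})$, which they generally do not; the reference in the proposition should be read as condition \eqref{condition 2 complex} of Theorem \ref{thm: stability of dynamics complex case}, which your injectivity of $\kappa^{n,\rm hol}_p$ does give.
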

There are several works regarding composition operators on such RKHSs \cite{CCG, CC93, IIS20, KD20, HHK13}.
\begin{remark}
The function space treated in \cite{KD20, HHK13} is isomorphic to the above RKHS with respect to a sum of atomic measures with the change of variable $z \mapsto iz$.
\end{remark}

\subsubsection{Banach spaces defined by positive definite functions}
\label{ex: shift-invariant kernels Rd}
Assume $\mu$ is absolutely continuous, namely $\mu = w(\xi) {\rm d}\xi$ for some $w \in L^1(\mathbb{R}^d)$.
We further assume $w \in L^q(\mathbb{R}^d)$ for all $q > 0$.
We define
\begin{align*}
\mathcal{B}_w^q := \left\{ h \in \mathcal{E}(\mathbb{R}^d) \cap \mathcal{S}' : \mathcal{F}{h} \in L^q(\mathbb{R}^d, w^{-1}) \right\},
\end{align*}
where $\mathcal{S}'$ is the space of tempered distributions on $\mathbb{R}^d$, $\mathcal{F}$ is the Fourier transform on $\mathcal{S}'$ such that $\mathcal{F}[h](x) = \int h(\xi){\rm e}^{-i\xi\cdot x} {\rm d}\xi$ for any Schwartz function $h$, and $L^q(\mathbb{R}^d, w^{-1})$ is the space composed of the measurable functions $h$ on $\mathbb{R}^d$ such that $h(\xi) = 0$ on $\{w(\xi) = 0 \}$ and $\int |h|^q w^{-1}(\xi){\rm d}\xi < \infty$.
Then, we have the following proposition \cite[Theorem 4.1]{FASSHAUER2015115}:

\begin{proposition}
Assume $\mu = w(\xi) {\rm d}\xi$ for some $w \in \cap_{r>0}L^r(\mathbb{R}^d)$.
Then, $\mathcal{B}_w^q$ is a Banach space isometrically isomorphic to $L^q(\mu)$ and the inclusion $\iota: \mathcal{B}_w^q \hookrightarrow \mathcal{E}(X)$ is continuous.
In addition, if we further assume $\int \prod_j{\rm e}^{a_j |\xi_j|} {\rm d}\mu(\xi) < \infty$ for some $a = (a_1,\dots, a_d) \in \mathbb{R}^d_{>0}$, we have $\mathcal{B}_w^q \subset \mathcal{A}(\mathbb{I}_a)$ and its inclusion map is continuous as well.
\end{proposition}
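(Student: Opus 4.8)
Since the first assertion is essentially \cite[Theorem 4.1]{FASSHAUER2015115}, the plan is to recall the underlying isometry and then supply the two topological statements: continuity of $\iota\colon\mathcal{B}_w^q\hookrightarrow\mathcal{E}(\mathbb{R}^d)$, and, under the exponential moment hypothesis, the holomorphic extension to $\mathbb{I}_a$ together with continuity of $\mathcal{B}_w^q\hookrightarrow\mathcal{A}(\mathbb{I}_a)$. The natural isometry is $\Lambda\colon\mathcal{B}_w^q\to L^q(\mu)$, $\Lambda(h):=w^{-2/q}\mathcal{F}[h]$; it is well defined because $\mathcal{F}[h]$ vanishes a.e.\ on $\{w=0\}$ by the definition of $\mathcal{B}_w^q$, and $\|\Lambda(h)\|_{L^q(\mu)}^q=\int|w^{-2/q}\mathcal{F}[h]|^qw\,{\rm d}\xi=\int|\mathcal{F}[h]|^qw^{-1}\,{\rm d}\xi=\|h\|_{\mathcal{B}_w^q}^q$. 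Surjectivity of $\Lambda$ amounts to checking that $\mathcal{F}^{-1}[w^{2/q}g]$ is a smooth tempered function for every $g\in L^q(\mu)$, and completeness of $\mathcal{B}_w^q$ is then inherited from that of $L^q(\mu)$; for $q=2$ this $\Lambda$ coincides, up to the normalisation of $\mathcal{F}$ and the reflection $\xi\mapsto-\xi$, with the isomorphism $\rho$ of Proposition \ref{prop: condition 2 shift invariant}.

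For the continuity of $\iota$ I would show that for each multi-index $\alpha$ one has $\xi^\alpha\mathcal{F}[h]\in L^1(\mathbb{R}^d)$ with $\|\,|\xi^\alpha|\,\mathcal{F}[h]\,\|_{L^1}\le C_\alpha\|h\|_{\mathcal{B}_w^q}$: writing $|\xi^\alpha|\,|\mathcal{F}[h]|=(|\xi^\alpha|w^{1/q})\cdot(w^{-1/q}|\mathcal{F}[h]|)$ and applying Hölder's inequality with the conjugate exponent $q'$, the second factor has $L^q$-norm $\|h\|_{\mathcal{B}_w^q}$, while $\|\,|\xi^\alpha|w^{1/q}\,\|_{L^{q'}}<\infty$ follows from the standing moment condition $\int|\xi|^nw\,{\rm d}\xi<\infty$ together with $w\in\bigcap_{r>0}L^r(\mathbb{R}^d)$ (these imply $\int|\xi|^nw^s\,{\rm d}\xi<\infty$ for all $n\ge0$ and $s>0$, by splitting $|\xi|^nw^s=(|\xi|^{np}w)^{1/p}\cdot w^{s-1/p}$ and using Hölder once more). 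Hence $h=\mathcal{F}^{-1}[\mathcal{F}[h]]$ is $C^\infty$ with $\sup_{\mathbb{R}^d}|\partial^\alpha h|\le C_\alpha\|h\|_{\mathcal{B}_w^q}$, which gives continuity into $\mathcal{E}(\mathbb{R}^d)$; alternatively, since $\mathcal{B}_w^q$ is an $F$-space and $\mathcal{E}(\mathbb{R}^d)$ is Fréchet, it suffices to check that $\iota$ has closed graph, which reduces to the observation that $L^q(\mathbb{R}^d,w^{-1})$-convergence implies convergence in $\mathcal{S}'$ (pair against a Schwartz function and apply Hölder), so a limit taken simultaneously in $\mathcal{B}_w^q$ and in $\mathcal{E}(\mathbb{R}^d)$ must coincide in $\mathcal{D}'(\mathbb{R}^d)$.

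For the last assertion, assume $\int\prod_j{\rm e}^{a_j|\xi_j|}\,{\rm d}\mu(\xi)<\infty$. Since $\mathcal{F}[h]\in L^1(\mathbb{R}^d)$ by the previous step, Fourier inversion gives $h(x)=(2\pi)^{-d}\int(\mathcal{F}[h])(\xi){\rm e}^{ix\cdot\xi}\,{\rm d}\xi$, and I would set $\tilde h(z):=(2\pi)^{-d}\int(\mathcal{F}[h])(\xi){\rm e}^{iz\cdot\xi}\,{\rm d}\xi$ for $z\in\mathbb{I}_a$. On a compact $K\subset\mathbb{I}_a$ one has $|{\rm Im}\,z_j|\le a_j-\varepsilon_K$, hence $|{\rm e}^{iz\cdot\xi}|={\rm e}^{-{\rm Im}(z)\cdot\xi}\le\prod_j{\rm e}^{a_j|\xi_j|}$, so the integrand is dominated by $|\mathcal{F}[h](\xi)|\prod_j{\rm e}^{a_j|\xi_j|}$, whose integral is finite and bounded by $C_K\|h\|_{\mathcal{B}_w^q}$ by Hölder's inequality, using the exponential moment hypothesis together with $w\in\bigcap_{r>0}L^r(\mathbb{R}^d)$. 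Therefore the integral converges locally uniformly on $\mathbb{I}_a$, $\tilde h$ is holomorphic there (by Morera's theorem, or by differentiating under the integral sign), $\tilde h|_{\mathbb{R}^d}=h$, and the same domination yields $\sup_{z\in K}|\tilde h(z)|\le C_K\|h\|_{\mathcal{B}_w^q}$, i.e.\ $\mathcal{B}_w^q\hookrightarrow\mathcal{A}(\mathbb{I}_a)$ is continuous.

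The real work is purely the integrability bookkeeping: one has to verify, through Hölder's inequality, that products of $\mathcal{F}[h]$ (equivalently of $g\in L^q(\mu)$) with polynomials and with the exponential weight $\prod_j{\rm e}^{a_j|\xi_j|}$ are integrable, which is where the three hypotheses $w\in\bigcap_{r>0}L^r(\mathbb{R}^d)$, $\int|\xi|^nw\,{\rm d}\xi<\infty$, and $\int\prod_j{\rm e}^{a_j|\xi_j|}w\,{\rm d}\xi<\infty$ get used; matching the exponents in the weight $w^{q'/q}$ against the available bounds is the most delicate point when $q\neq2$, whereas for $q=2$ the argument degenerates to the RKHS computation of Proposition \ref{prop: condition 2 shift invariant complex ver}. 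Granting this, the topological conclusions follow at once from the closed graph theorem (or the explicit pointwise bounds) and from one-variable complex analysis applied under the integral sign.
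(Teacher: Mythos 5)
Your route is the same as the paper's: invoke the isometry $h\mapsto w^{-2/q}\mathcal{F}[h]$ onto $L^q(\mu)$ from the cited theorem of Fasshauer--Ye, then obtain the continuity statements from Fourier inversion plus H\"older applied to $|\mathcal{F}[h]|=(w^{-1/q}|\mathcal{F}[h]|)\cdot w^{1/q}(\cdots)$. The first two assertions are handled correctly: your auxiliary estimate $\int|\xi|^{n}w^{s}\,{\rm d}\xi<\infty$ for all $n\ge0$, $s>0$ is right, and it yields $\sup_{\mathbb{R}^d}|\partial^{\alpha}h|\le C_\alpha\|h\|_{\mathcal{B}^q_w}$, hence continuity into $\mathcal{E}(\mathbb{R}^d)$, which is exactly the paper's displayed inversion-plus-H\"older bound.

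The third step, however, does not close, and it is precisely the point you flagged and then granted. Your domination claim is that $\int|\mathcal{F}[h](\xi)|\prod_j{\rm e}^{(a_j-\varepsilon_K)|\xi_j|}{\rm d}\xi\le C_K\|h\|_{\mathcal{B}^q_w}$ follows by H\"older from $\int\prod_j{\rm e}^{a_j|\xi_j|}w\,{\rm d}\xi<\infty$ and $w\in\bigcap_{r>0}L^r$. But the H\"older split needs $\int w^{q'/q}\prod_j{\rm e}^{q'(a_j-\varepsilon_K)|\xi_j|}{\rm d}\xi<\infty$ (and by $L^q$--$L^{q'}$ duality this is essentially necessary for a bound uniform over the unit ball), whereas the hypotheses control the rate $a_j$ against the weight $w^{1}$, not the rate $q'(a_j-\varepsilon_K)$ against $w^{q'/q}$; interpolating $(w{\rm e}^{a\cdot|\xi|})^{\theta}\,w^{q'/q-\theta}$ only reaches the smaller strip $|{\rm Im}\,z_j|<a_j/\max(q,q')$ (e.g.\ $a_j/2$ when $q=2$). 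This is not a mere artifact of the method: take $d=1$, $w(\xi)={\rm e}^{-a|\xi|}(1+\xi^2)^{-1}$ (all standing hypotheses hold), $q=2$, and $h(x)=(c^2+x^2)^{-1}$ with $a/2<c<a$; then $\mathcal{F}[h]=\tfrac{\pi}{c}{\rm e}^{-c|\xi|}\in L^2(\mathbb{R},w^{-1})$, so $h\in\mathcal{B}^2_w$, yet $h$ has poles at $\pm ic\in\mathbb{I}_a$ and admits no holomorphic extension to $\mathbb{I}_a$. So the inclusion into $\mathcal{A}(\mathbb{I}_a)$ on the full strip is not available from these hypotheses; your fallback remark that $q=2$ ``degenerates to the RKHS computation'' does not rescue it, since that computation (Cauchy--Schwarz against $\int{\rm e}^{2|{\rm Im}z||\xi|}{\rm d}\mu$) has the same factor-of-$q'$ mismatch. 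To be fair, the paper's own proof is the identical one-line display and is subject to the same objection; what your argument actually proves is continuity of $\mathcal{B}^q_w\hookrightarrow\mathcal{A}(\mathbb{I}_b)$ for $b_j<a_j/\max(q,q')$, or into $\mathcal{A}(\mathbb{I}_a)$ under the strengthened hypothesis $\int w^{q'/q}\prod_j{\rm e}^{q'a_j|\xi_j|}{\rm d}\xi<\infty$.
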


\begin{proof}
The first statement follows from the fact that the linear map $\theta: L^q(\mu) \rightarrow \mathcal{B}^q_w$ defined by
\[\theta(g) := \mathcal{F}^{-1}(gw^{2/q}) \] 
is actually an isometric isomorphism (see the proof of \cite[Theorem 4.1]{FASSHAUER2015115}).
The second and third statements follow from the Fourier inversion formula: for any $Q \in \mathbb{C}[t_1,\dots, t_d]$, $h \in \mathcal{B}_w^q$ and $p \in \mathbb{R}^d$, we have
\begin{align}
Q(\partial_x)h(p) &= \int_\mathbb{R^d} \theta^{-1}(h)(\xi) Q(-i\xi) w(\xi)^{2/q-1}{\rm e}^{-ip\cdot \xi} {\rm d}\mu(\xi) \label{explicit formula for sik}\\
 &\le C\|h\|_{\mathcal{B}^q_w} \nonumber
\end{align}
for some $C>0$ by the H\"older inequality.
\end{proof}
A measurable function $g$ on $\mathbb{R}^d$ with suitable decay can be embedded in $(\mathcal{B}^q_w)'$ when we regard $g$ as $h \mapsto \int \theta^{-1}(h)\overline{g}{\rm d}\mu$.
Thus, by \eqref{explicit formula for sik}, the $\iota'(\delta_p(Q(\partial_x))) \in (\mathcal{B}^q_w)'$ is described by the function $\overline{Q}(i\xi) w(\xi)^{2/q-1}{\rm e}^{ip\cdot \xi}$.
Then, we obtain statements similar to Propositions \ref{prop: condition 2 shift invariant} and \ref{prop: condition 2 shift invariant complex ver} hold.
In particular, we have the following proposition:

\begin{proposition}
Assume $\mu = w(\xi) {\rm d}\xi$ for some $w \in \cap_{r>0}L^r(\mathbb{R}^d)$.
Then, $\mathcal{B}_w^q$ satisfies the condition \eqref{condition 2} in Theorem \ref{thm: stability of dynamics} for any smooth map on $\mathbb{R}^d$.
If we further assume $\int \prod_j{\rm e}^{a_j |\xi_j|} {\rm d}\mu(\xi) < \infty$ for some $a = (a_1,\dots, a_d) \in \mathbb{R}^d_{>0}$, the space $\mathcal{B}_w^q$ satisfies the condition \eqref{condition 2} in Theorem \ref{thm: stability of dynamics complex case} for any holomorphic map on $\mathbb{I}_a$.
\end{proposition}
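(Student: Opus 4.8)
\emph{The plan} is to obtain this exactly as Propositions \ref{prop: condition 2 shift invariant} and \ref{prop: condition 2 shift invariant complex ver} were obtained, by exploiting the explicit description of $\iota'(\delta_p(Q(\partial_x)))$ recorded just before the statement. First I would reduce both assertions to the injectivity of $\iota'$ on $\mathcal{D}(X)_p$ (resp. on $\mathcal{D}^{\rm hol}(\mathbb{I}_a)_p$) for every base point $p$: once $\iota'|_{\mathcal D(X)_p}$ is injective, the same argument as in the proof of Proposition \ref{prop: condition 2 for fps kernels} shows that every $\kappa^n_p$ is injective, so ${\rm Ker}(\kappa^n_p)=\{0\}\subset{\rm Ker}({\rm gr}^n_{f^r_*})$ for \emph{all} $n\ge0$; this is precisely condition \eqref{condition 2}, for any smooth (resp. holomorphic) map and any periodic point. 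We may assume $\mu\neq0$, since otherwise $\mathcal B^q_w=\{0\}$ and there is nothing to prove.

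For the real case, recall from the discussion preceding the statement that, via the isometric isomorphism $\theta\colon L^q(\mu)\to\mathcal B^q_w$, the functional $\iota'(\delta_p(Q(\partial_x)))\in(\mathcal B^q_w)'$ corresponds to $L^q(\mu)\ni f\mapsto\int_{\mathbb R^d}f(\xi)\,\overline{g_Q(\xi)}\,{\rm d}\mu(\xi)$, where $g_Q(\xi):=\overline Q(i\xi)\,w(\xi)^{2/q-1}\,{\rm e}^{ip\cdot\xi}$. Suppose $Q\neq0$ but this functional is $0$. For every $R>0$ one has
\[
\int_{|\xi|\le R}|g_Q|\,{\rm d}\mu=\int_{|\xi|\le R}|\overline Q(i\xi)|\,w(\xi)^{2/q}\,{\rm d}\xi\le\Big(\sup_{|\xi|\le R}|\overline Q(i\xi)|\Big)\int_{\mathbb R^d}w(\xi)^{2/q}\,{\rm d}\xi<\infty,
\]
the last integral being finite because $w\in\bigcap_{r>0}L^r(\mathbb R^d)$. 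Hence $g_Q$ is locally $\mu$-integrable, and since $\mu$ is a finite measure, the bounded function $f_R:=\mathbf 1_{\{|\xi|\le R\}}\,g_Q/|g_Q|$ (with $0/0:=0$) lies in $L^q(\mu)$; plugging $f_R$ in forces $\int_{|\xi|\le R}|g_Q|\,{\rm d}\mu=0$, and letting $R\to\infty$ yields $g_Q=0$ $\mu$-a.e. Because $w^{2/q-1}>0$ and $|{\rm e}^{ip\cdot\xi}|\equiv1$, this means $\overline Q(i\xi)=0$ for Lebesgue-a.e.\ $\xi$ in $\{w>0\}$; but $\{w>0\}$ has positive Lebesgue measure while the zero locus of a nonzero polynomial is Lebesgue-null, so $Q=0$, a contradiction. (That $\mathbb C[\xi_1,\dots,\xi_d]\hookrightarrow L^q(\mu)$, which is what makes $g_Q$ meaningful, is automatic here: all moments of $\mu$ are finite by the standing hypothesis of Section \ref{ex: shift invariant kernels Rd}, and for $0<q<1$ one has $|\xi|^{nq}\le 1+|\xi|^n$.)

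For the complex case one adds the hypothesis $\int\prod_j{\rm e}^{a_j|\xi_j|}\,{\rm d}\mu(\xi)<\infty$; then $\mathcal B^q_w\subset\mathcal A(\mathbb I_a)$ with continuous inclusion, and the Fourier-inversion identity \eqref{explicit formula for sik} extends to $p\in\mathbb I_a$ (differentiating under the integral is legitimate since the exponential moment dominates $|\xi|^n\,{\rm e}^{-{\rm Im}(p)\cdot\xi}$), so that $\iota'(\delta_p^{\rm hol}(Q(\partial_z)))$ is represented by $g^{\rm hol}_Q(\xi)=\overline Q(i\xi)\,w(\xi)^{2/q-1}\,{\rm e}^{i\overline p\cdot\xi}$. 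Since $|{\rm e}^{i\overline p\cdot\xi}|={\rm e}^{{\rm Im}(p)\cdot\xi}$ is nowhere zero and bounded on bounded sets, the argument of the previous paragraph applies verbatim and gives injectivity of $\iota'|_{\mathcal D^{\rm hol}(\mathbb I_a)_p}$, hence condition \eqref{condition 2} of Theorem \ref{thm: stability of dynamics complex case}.

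\emph{The main point requiring care} --- and the only place the argument departs from the RKHS proofs of Propositions \ref{prop: condition 2 shift invariant}--\ref{prop: condition 2 shift invariant complex ver} --- is that for $0<q<1$ the space $\mathcal B^q_w\cong L^q(\mu)$ is a genuine quasi-Banach space that need not be locally convex, so one cannot invoke duality to present functionals as $L^{q'}$-functions; instead one must work directly with the concrete integral pairing and with test functions ($\mathbf 1_{\{|\xi|\le R\}}$ times a unimodular phase) that visibly belong to $L^q(\mu)$ because $\mu$ is finite. The accompanying routine check is the local $\mu$-integrability of the representing function, which is exactly where the hypothesis $w\in\bigcap_{r>0}L^r$ is used.
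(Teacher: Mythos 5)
Your argument is correct and follows essentially the same route as the paper, which proves this proposition only implicitly by noting that $\iota'(\delta_p(Q(\partial_x)))$ is represented by $\overline{Q}(i\xi)\,w(\xi)^{2/q-1}{\rm e}^{ip\cdot\xi}$ via the pairing $h\mapsto\int\theta^{-1}(h)\overline{g}\,{\rm d}\mu$ and then invoking the same injectivity argument as in Propositions \ref{prop: condition 2 shift invariant} and \ref{prop: condition 2 shift invariant complex ver} (a nonzero polynomial cannot vanish Lebesgue-a.e.\ on $\{w>0\}$, since $\mu$ is absolutely continuous). Your explicit truncated-test-function argument simply fills in the duality-free details that the paper leaves to the reader for the quasi-Banach range $0<q<1$; the only cosmetic point is that the bound should use $\int_{|\xi|\le R}w^{2/q}\,{\rm d}\xi$ rather than the integral over all of $\mathbb{R}^d$, which is all that is needed.
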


\section{Bounded composition operators for the complex affine space} \label{sec: Cd}
In this section, we discuss bounded composition operators on quasi-Banach spaces continuously included in $\mathcal{A}(\mathbb{C}^d)$.
At first, we treat the one-dimensional case and prove that maps except affine maps cannot induce bounded composition operators (Theorem \ref{thm: affineness in C, banach}).
Next, we discuss higher dimensional cases.
In the higher-dimensional case, the situation is much more complicated.
We prove that polynomial automorphisms, except affine maps, cannot induce bounded composition operators under mild conditions in the two-dimensional case (Theorem \ref{thm: two dim case}).
We conclude this paper with remarks for general dimensional cases.

\subsection{One-dimensional case}
Here, we provide the proof of Theorem \ref{mainthm2}, affineness of a holomorphic dynamical system which induces a bounded composition operator on a quasi-Banach subspace of $\mathcal{A}(\mathbb{C})$, by combining several classical results of holomorphic dynamics in one variable with the results proved in this paper.
und
\begin{proof}[Proof of Theorem \ref{mainthm2}]
Suppose that $f$ is not affine but that $C_f$ is a bounded linear operator on $V$.
By the theory of holomorphic dynamics of one variable, it is known that $f$ has (infinitely many) periodic points $p \in \mathbb{C}$ such that $|f'(p)|>1$ unless $f$ is an affine map (for example, see \cite[Theorem 1.20]{Sch10}). 
Such a point $p$ is called {\em repelling} periodic point.
Therefore, by Theorem \ref{thm: stability of dynamics complex case} and Corollary \ref{cor: condition 2 riemann surface}, we may assume $V$ is finite-dimensional.
We first assume there exists a nonconstant entire function $h \in V$ and $\lambda \in \mathbb{C}$ such that $C_f[h] = \lambda h$.
There exists a nonempty perfect set $J_f$ (so-called the Julia set) that is contained in the closure of the set of periodic points of $f$ (\cite[Theorem 4.2.7]{Bea91} and \cite[Theorem 1.20]{Sch10}).
Thus, there exists $p \in \mathbb{C}$ such that $f^n(p) = p$ and $h(p)\neq0$ for some $n\ge 1$.
Then, we have $h(p) = h(f^n(p))=\lambda^nh(p)$, and thus $\lambda^n =1$.
As $f^n$ is also not an affine map, we may assume $\lambda =1$, namely $h \circ f = h$.
Since the Julia set $J_f$ is the same as the closure of $\cup_{n\ge0} f^{-n}(S)$ for a finite set $S \subset \mathbb{C}$ of cardinality greater than $2$ (\cite[Theorem 4.2.7]{Bea91} and \cite[Theorem 1.7]{Sch10}), there exists $z \in \mathbb{C}$ such that $J_f$ is contains in the closure of $\cup_{n \ge 0} f^{-n}(z)$.
Thus, $h(J_f) \subset \{h(z)\}$, and we conclude that $h$ is constant, which is contradiction.
Next, we assume that there is no eigenfunction of $C_f$ other than constant functions.
Then, a nonconstant function exists $h \in V$ such as $C_f[h] = h+1$.
Let $p \in \mathbb{C}$ be a periodic point of $f$.
Let $r$ be the period, namely  $f^r(p) = p$.
Then, we see that $h(p) = C_f^r[h](p) = h(p) + r$, and thus $r=0$.
We again have a contradiction.
Therefore, $f(z)$ is $az + b$ for some $a, b \in \mathbb{C}$.
\end{proof}

\subsection{Two-dimensional cases}
Here, we discuss a higher dimensional case, especially the two-dimensional case, under the condition that $\kappa_p^{n, \rm hol}$ is injective for all but finitely many $p \in \mathbb{C}^d$ and infinitely many $n\ge0$.
In contrast to the one-dimensional case, the relationship between the behavior of dynamics and the boundedness of composition operators gets much more complicated.
Let $V \subset \mathcal{A}(\mathbb{C}^d)$ be a quasi-Banach space, and its inclusion be continuous.
We define
\begin{align}
\mathcal{G}_d(V) := \left\{ A \in {\rm GL}_d(\mathbb{C}) : C_{A(\cdot)+b}\text{ is bounded on $V$ for some $b\in\mathbb{C}^d$}\right\}.
\end{align}
We note that $\mathcal{G}_d(V)$ is a sub semigroup of ${\rm GL}_d(\mathbb{C})$.

We first consider a two-dimensional case.
We call a holomorphic map $f=(f_1, f_2): \mathbb{C}^2 \rightarrow \mathbb{C}^2$ algebraic if $f_1$ and $f_2$ are polynomials.
If there exists another algebraic map $g: \mathbb{C}^2 \rightarrow \mathbb{C}^2$ such that $f\circ g = g \circ f = {\rm id}$, we call $f$ a polynomial automorphism.
In the two-dimensional case, an elementary transform $e_{Q,a,b,c}$ and a H\'enon transform $h_{Q,b}$ are important examples (and they are essentially only examples of non-trivial polynomial automorphisms on $\mathbb{C}^2$ by the structure theorem of the automorphism group \cite{friedland_milnor_1989}):
\begin{align*}
e_{P,a,b,c}(x,y) &:= (ax+P(y), by + c),\\
h_{Q,b}(x,y) &:= (Q(x) - by, x),
\end{align*}
where $a,b,c \in \mathbb{C}$ with $a, b \neq 0$, $P$ is a polynomial, and $Q$ is a polynomial of degree greater than $1$.

Now, we prove Theorem \ref{mainthm3}.
\begin{proof}[Proof of Theorem \ref{mainthm3}]
Let $f$ be a non-affine polynomial automorphism on $\mathbb{C}^2$.
As in \cite{friedland_milnor_1989}, we may describe $f = g_n \circ \dots \circ g_1$ with a reduced word $(g_1,\dots, g_n)$ (see \cite[Definition, p.69]{friedland_milnor_1989}), where we may assume each $g_i$ is either a non-upper triangular regular matrix or elementary transform.

If $n$ is even, then $f$ is conjugate to a cyclically reduced element (\cite[p.70]{friedland_milnor_1989}).
Thus, by \cite[Theorem 2.6]{friedland_milnor_1989}, the map $f$ is a finite composition of generalized H\'enon transforms $h_{Q_1,b_1}\circ \dots \circ h_{Q_r, b_r}$ for some $r>0$.
Then, by \cite[Theorem 3.4]{BS92}, there exists a periodic point $p \in \mathbb{C}^2$ of period $r$ such that the absolute value of an eigenvalue of ${\rm d}f^r_p$ is greater than 1.
Thus, by Theorem \ref{thm: stability of dynamics complex case}, $f$ cannot induce a bounded composition operator on $V$.

Suppose $n$ is odd.
If $g_n$ is a elementary transform, by Lemma \ref{lem: nazo lemma} below, there exists $A \in \mathcal{G}_2(V)$ such that $A=(a_{ij})$ with $a_{21} \neq 0$ and $h := A(\cdot) + b$ induces a bounded composition operator for some $b \in \mathbb{C}$.
Then, $h\circ f$ is conjugate to a finite composition of generalized H\'enon transforms by \cite[Theorem 2.6]{friedland_milnor_1989}. 
Assume that $g_n$ is a regular matrix.
If $g_1g_n$ is not an upper triangular matrix, then $g_n^{-1} f g_n$ is cyclically reduced, and thus as in the above argument, $C_f$ cannot become a bounded linear operator.
We now assume $g_1g_n$ is an upper triangular matrix.
We may assume $g_n$ is in the form of
\[
g_n =
\left(\begin{array}{cc} 1 & -\alpha \\ 0 & 1  \end{array} \right) 
\left(\begin{array}{cc} \beta & 0\\ 0 & \gamma  \end{array} \right)
\left(\begin{array}{cc} 0 & 1\\ 1 & 0  \end{array} \right)
.
\] 
Since
\[
\left(\begin{array}{cc} 1 & \alpha \\ 0 & 1  \end{array} \right)
\left(\begin{array}{cc} a_{11}& a_{12} \\ a_{21} & a_{22} \end{array} \right)
\left(\begin{array}{cc} 1 & -\alpha \\ 0 & 1  \end{array} \right)
=
\left(\begin{array}{cc} * & -a_{21}\alpha^2 - \alpha(a_{11}-a_{22}) + s_{12} \\ * & *  \end{array} \right),\]
there exists $A \in \mathcal{G}_2(V)$ such that $g_n^{-1} A g_n$ is not an upper triangular matrix by Lemma \ref{lem: nazo lemma} below.
Let $h := A(\cdot) + b$ such that $C_h$ is bounded.
Then, we see that 
\[ g_n^{-1}\circ h\circ f \circ g_n = (g_n^{-1} \circ h \circ g_n )\circ g_{n-1}\circ \cdots \circ (g_2\circ g_1g_n)\] 
is conjugate to a finite composition of generalized H\'enon transforms, by \cite[Theorem 2.6]{friedland_milnor_1989} again.
Therefore, we conclude that there exists an affine map $h$ such that $C_h$ is bounded and $h \circ f$ is conjugate to a finite composition of generalized H\'enon transforms for any $n$.
If $f$ induces bounded composition operators, then $h \circ f$ also does the same, and thus $C_f$ cannot be bounded.
Therefore, $f$ must be an affine map if $C_f$ is bounded on $V$. 

\end{proof}
This theorem implies that if sufficiently many affine maps induce bounded composition operators on the function spaces introduced in Section \ref{sec: function spaces}, no polynomial automorphism except affine maps can induce a bounded composition operator.

We expect that the higher dimensional version of Theorem \ref{thm: two dim case} is valid not only for polynomial automorphisms but also for any holomorphic maps.
In fact, it is actually proved in specific quasi-Banach spaces.
For example, Carswell, MacCluer, and Schuster proved the result in \cite{CMS}, while Cho, Choe, and Koo established the same for the Fock-Sobolev space in \cite{HBH14}.
When the kernel is in the form of $k(z,w) = \Phi({}^t\overline{z}w)$ for an entire function $\Phi$, which is a special form of the kernel introduced in Section \ref{subsec: pdf fps}, Luan, Khoi, and Le proved it in \cite{DKL17}, and Stochel and Stochel also solved this problem in \cite{SS17}.
In our previous work \cite{IIS20}  with Ikeda and Sawano, we investigated the RKHS associated with a shift-invariant kernel (Section \ref{ex: shift-invariant kernels Rd}).
We treated certain general class of shift invariant kernels, and the condition $\langle \mathcal{G}_d(V) \rangle_{\mathbb{C}} = {\rm M}_d(\mathbb{C})$ appears as well.
However, we did not use any results developed in the theory of holomorphic dynamics, and introduced this condition in a different context.
We expect that our results (\cite[Theorem 1]{IIS20}) hold without the technical condition (called Assumption (B) there). 

The following lemma is used in the above proof of Theorem \ref{thm: two dim case}.
\begin{lemma}
\label{lem: nazo lemma}
Let $\mathcal{G} \subset {\rm GL}_2(\mathbb{C})$ be a sub semigroup.
For $S=(s_{ij}) \in \mathcal{G}$, let 
\[ \mathfrak{A}_S :=   \left\{\alpha \in \mathbb{C} : s_{21}\alpha^2+(s_{11}-s_{22})\alpha -s_{12} = 0 \right\}. \] 
Then $\langle \mathcal{G} \rangle_\mathbb{C} = {\rm M}_2(\mathbb{C})$ if and only if $\cap_{S \in \mathcal{G}} \mathfrak{A}_S = \emptyset$ and there exists $B = (b_{ij}) \in \mathcal{G}$ such that $b_{21} \neq 0$.
\end{lemma}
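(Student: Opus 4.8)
The plan is to analyze the $\mathbb{C}$-span of $\mathcal{G}$ inside $\mathrm{M}_2(\mathbb{C})$ directly, using the conjugation action of upper-triangular unipotent matrices as the main tool, since that is exactly what governs the quantity $\mathfrak{A}_S$: for $U_\alpha = \left(\begin{smallmatrix} 1 & \alpha \\ 0 & 1 \end{smallmatrix}\right)$, the $(1,2)$-entry of $U_\alpha S U_\alpha^{-1}$ is $-s_{21}\alpha^2 - (s_{11}-s_{22})\alpha + s_{12}$, which vanishes precisely when $\alpha \in \mathfrak{A}_S$. First I would dispose of the ``only if'' direction. Suppose $\langle \mathcal{G}\rangle_\mathbb{C} = \mathrm{M}_2(\mathbb{C})$. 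If every $S \in \mathcal{G}$ had $s_{21}=0$, then $\mathcal{G}$ would lie in the upper-triangular subalgebra, contradicting the span being all of $\mathrm{M}_2(\mathbb{C})$; so some $B \in \mathcal{G}$ has $b_{21}\neq 0$. For the emptiness of $\cap_S \mathfrak{A}_S$: if some fixed $\alpha_0$ lay in $\mathfrak{A}_S$ for every $S \in \mathcal{G}$, then conjugating the whole semigroup by $U_{\alpha_0}$ would send each generator to a matrix with zero $(1,2)$-entry, i.e.\ $U_{\alpha_0}\mathcal{G}U_{\alpha_0}^{-1}$ lies in the lower-triangular subalgebra; but conjugation is a linear automorphism of $\mathrm{M}_2(\mathbb{C})$, so it would force $\langle \mathcal{G}\rangle_\mathbb{C}$ to be a proper subspace, again a contradiction. (One subtlety: $\mathfrak{A}_S$ is the solution set of a quadratic that may be degenerate — when $s_{21}=0$ and $s_{11}=s_{22}$ the equation is either $0=0$, so $\mathfrak{A}_S=\mathbb{C}$, or $-s_{12}=0$ with $s_{12}\neq 0$, so $\mathfrak{A}_S=\emptyset$ — and these cases should be tracked, but they do not affect the logic above.)

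For the ``if'' direction, assume $\cap_{S\in\mathcal{G}}\mathfrak{A}_S=\emptyset$ and fix $B=(b_{ij})\in\mathcal{G}$ with $b_{21}\neq 0$. Let $W := \langle\mathcal{G}\rangle_\mathbb{C}$; I want $W = \mathrm{M}_2(\mathbb{C})$. Since $W$ is stable under nothing a priori, the idea is to first produce one element of $W$ whose $(1,2)$-entry behaves well, then use $B$ and scaling/subtracting to generate the standard basis $E_{11},E_{12},E_{21},E_{22}$. Concretely: the emptiness of the intersection means that for the particular $\alpha_0 := b_{12}/b_{21}$ (a root of $\mathfrak{A}_B$ when $b_{11}=b_{22}$, but in general I would instead pick $\alpha_0$ to be one of the two roots of $b_{21}\alpha^2+(b_{11}-b_{22})\alpha-b_{12}$, so $\alpha_0 \in \mathfrak{A}_B$) there exists $S\in\mathcal{G}$ with $\alpha_0\notin\mathfrak{A}_S$. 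Replacing $\mathcal{G}$ by its conjugate $U_{\alpha_0}\mathcal{G}U_{\alpha_0}^{-1}$ — legitimate since conjugation is a linear isomorphism preserving ``$W=\mathrm{M}_2(\mathbb{C})$'' and sending $\mathfrak{A}_T$ to $\mathfrak{A}_T - \alpha_0$ appropriately — I may assume $B$ has $(1,2)$-entry zero, i.e.\ $b_{12}=0$, while there is still some $S\in\mathcal{G}$ with nonzero $(1,2)$-entry after conjugation (that is the content of $\alpha_0\notin\mathfrak{A}_S$), and $b_{21}\neq 0$ is preserved. Now $B=\left(\begin{smallmatrix} b_{11} & 0 \\ b_{21} & b_{22}\end{smallmatrix}\right)$ and $S$ has $s_{12}\neq 0$.

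The endgame is then a finite linear-algebra computation within $W$. From $B$ and $B^2 \in \mathcal{G}$ (using that $\mathcal{G}$ is a semigroup) I extract lower-triangular matrices; combined with $B^n$ for a few $n$, or with the Cayley–Hamilton relation $B^2 = (\mathrm{tr}\,B)B - (\det B)I$, I would like to get $I \in W$ and hence all three of $E_{11}+E_{22}$, a nonzero lower-triangular nilpotent (a multiple of $E_{21}$), and a diagonal matrix into $W$; together with $S$, whose $E_{12}$-component is now nonzero, subtracting off its lower-triangular and diagonal parts yields $E_{12}\in W$, and then $E_{11},E_{22},E_{21}$ follow. The main obstacle I anticipate is precisely making this last step unconditional: a priori $\mathcal{G}$ might be small (it need only be a semigroup, not a group, and need not contain $I$), so I cannot freely invert or assume $B$ is diagonalizable. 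The robust fix is to argue at the level of the associative subalgebra generated by $\mathcal{G}$: $W$ is closed under the products of its generators, the subalgebra $\mathcal{A}$ generated by $\mathcal{G}$ satisfies $\mathcal{A}\subseteq$ (something containing $W$), and one shows $\mathcal{A}=\mathrm{M}_2(\mathbb{C})$ by Burnside's theorem once one verifies $\mathcal{G}$ acts irreducibly on $\mathbb{C}^2$ — irreducibility being exactly ruled-in by the combination ``some $b_{21}\neq 0$'' (no common eigenvector $e_1$) and ``$\cap\mathfrak{A}_S=\emptyset$'' (no common eigenvector of the form $e_2 + \alpha e_1$, equivalently no simultaneous conjugate into lower-triangular form, which is what a shared root of all $\mathfrak{A}_S$ would give). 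Then $\mathrm{M}_2(\mathbb{C})=\mathcal{A}=\langle\text{products of elements of }\mathcal{G}\rangle_\mathbb{C}$, and since every such product lies in $\mathcal{G}$, we get $\langle\mathcal{G}\rangle_\mathbb{C}=\mathrm{M}_2(\mathbb{C})$. I expect the cleanest writeup to invoke Burnside directly after checking irreducibility, which reduces the whole lemma to the two case-analyses of when $\mathfrak{A}_S$ detects a common lower-triangular eigenvector and when $s_{21}=0$ detects the eigenvector $e_1$.
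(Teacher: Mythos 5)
Your proof is correct in its final (Burnside) form, but it follows a genuinely different route from the paper. You identify the two hypotheses with irreducibility of the $\mathcal{G}$-action on $\mathbb{C}^2$: the existence of some $b_{21}\neq 0$ rules out the common invariant line $\mathbb{C}e_1$, while a common root $\alpha$ of all the quadratics would make $U_\alpha\mathcal{G}U_\alpha^{-1}$ lower triangular, i.e.\ make the line spanned by $e_2-\alpha e_1$ (equivalently, the annihilator of the common left eigenvector $(1,\alpha)$) invariant -- the sign discrepancy with your ``$e_2+\alpha e_1$'' is an immaterial reparametrization. Since $\mathcal{G}$ is closed under products, $\langle\mathcal{G}\rangle_\mathbb{C}$ is already a subalgebra, so Burnside's theorem gives $\langle\mathcal{G}\rangle_\mathbb{C}={\rm M}_2(\mathbb{C})$, and the converse direction is the easy conjugation/upper-triangularity argument you give (which in fact treats the necessity of some $b_{21}\neq 0$ more explicitly than the paper does). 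The paper avoids Burnside entirely: it records the same key observation that $\alpha\in\mathfrak{A}_S$ means $(1,\alpha)$ is a left eigenvector of $S$, and then, fixing $B$ with $b_{21}\neq 0$ and some $C\in\mathcal{G}\setminus(\mathbb{C}I+\mathbb{C}B)$, shows by a direct computation that $BC\in\mathbb{C}I+\mathbb{C}B+\mathbb{C}C$ forces $\mathfrak{A}_B\cap\mathfrak{A}_C\neq\emptyset$; since a common element of $\mathfrak{A}_B\cap\mathfrak{A}_C$ lies in $\mathfrak{A}_{C'}$ for every $C'$ in that span, the hypothesis $\cap_S\mathfrak{A}_S=\emptyset$ yields some $D\in\mathcal{G}$ outside it, giving four independent elements $I,B,C,D$ of the span. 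Your approach buys brevity, a conceptual reading of the two conditions (no common invariant line), and an immediate generalization to ${\rm M}_d(\mathbb{C})$; the paper's buys a self-contained, purely elementary argument. One caveat: your first ``endgame'' (conjugating so that $b_{12}=0$ and assembling the standard basis by subtracting parts of $B$, $B^2$ and $S$) is indeed incomplete as you concede -- e.g.\ the lower-triangular part of $S$ is not known to lie in the span -- so the written proof should go through the irreducibility-plus-Burnside argument, which is sound as sketched.
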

\begin{proof}
We note that for $S = (s_{ij})$, $\alpha$ satisfies the equation $s_{21}\alpha^2+(s_{11}-s_{22})\alpha -s_{12} = 0$ if and only if there exists $\lambda \neq 0$ such that $(1,\alpha)(S-\lambda) = 0$.
In fact, since 
\begin{align*}
s_{21}\alpha^2+(s_{11}-s_{22})\alpha -s_{12} 
& = \det \left( \begin{array}{cc}s_{11}+s_{21}\alpha & s_{12} + s_{22}\alpha \\ 1 & \alpha \end{array}\right) \\
& =0,
\end{align*}
there exists $\lambda \in \mathbb{C}$ such that 
\[
\lambda (1,\alpha) = (1,\alpha) 
\left( \begin{array}{cc}s_{11} & s_{12} \\ s_{21}& s_{22}\end{array}\right).
\] 
We here regard any element in $\mathbb{C}^2$ as a horizontal vector.
We first prove the ``only if'' part.
Assume $\cap_{S\in \mathcal{G}}\mathfrak{A}_S \neq \emptyset$.
Let $\alpha \in \cap_{S\in \mathcal{G}}\mathfrak{A}_S$.
Then, for any $B \in \langle \mathcal{G} \rangle_\mathbb{C}$, 
\[(1,\alpha) B = \lambda_B (1,\alpha)\] 
for some $\lambda_B \in \mathbb{C}$, thus $\left( \begin{array}{cc} 0 & 1 \\ 0 & 0\end{array} \right)$ is not contained in $\langle \mathcal{G} \rangle_\mathbb{C}$.
We next prove the ``if'' part.
Assume $\cap_{S\in \mathcal{G}}\mathfrak{A}_S =\emptyset$.
Fix $B = (b_{ij}) \in \mathcal{G}$ such that $b_{21} \neq 0$.
Since $\mathfrak{A}_B \neq \emptyset$, take $\alpha \in \mathfrak{A}_B$ and let $v := (1,\alpha)$.
Let $\lambda \in \mathbb{C}$ such that $vB=\lambda v$.    %
We take another element $C \in \mathcal{G}$ such that $C \notin \mathbb{C} + \mathbb{C}B$.
It suffices to show that there exists $D \in \mathcal{G}$ such that $D \notin \mathbb{C} + \mathbb{C}B + \mathbb{C}C$.
If $BC \notin \mathbb{C} + \mathbb{C}B + \mathbb{C}C$, the element $BC$ is the desired one.
Suppose $BC = a + bB + cC$ for some $a,b,c \in \mathbb{C}$.
Then, we have
\begin{align*}
vBC = \lambda v C = av + b\lambda v + cvC.
\end{align*}
Thus, we have
\[
(\lambda - c)vC = (a + b \lambda) v.
\] 
In the case of $\lambda \neq c$, since $vC = (\lambda - c)^{-1}(a + b\lambda) v$, we have $\alpha \in \mathfrak{A}_C$, and thus $\alpha \in \mathfrak{A}_B \cap \mathfrak{A}_C$.
In the case of $\lambda = c$.
Then, we see that $a = -b\lambda$, and we have 
\[(\lambda - B)(b-C)=0.\] 
Let $w = b_{21}^{-1}(0,1)(\lambda - B) = (1, \beta)$.
Then $w(b-C)=0$, namely $\beta \in \mathfrak{A}_C$ and  we have either $w(\lambda -B) \in \mathbb{C}w$ or $C=b$.
Thus, we see that $\beta$ or $\alpha$ must be contained in $\mathfrak{A}_B \cap \mathfrak{A}_C$.
Therefore, we have $\mathfrak{A}_B \cap \mathfrak{A}_C \neq \emptyset$.
Since $\mathfrak{A}_{B}\cap \mathfrak{A}_{C}\cap \mathfrak{A}_{C'} \neq \emptyset$ for any $C' \in \mathbb{C} + \mathbb{C}B + \mathbb{C}C$, there exists
$D \in \mathcal{G}$ such that $D \notin \mathbb{C} + \mathbb{C}B + \mathbb{C}C$ as we are now assuming $\cap_{S \in \mathcal{G}} \mathfrak{A}_S = \emptyset$.
\end{proof}

\section*{Acknowledgement} 
We would like to thank Professor Yutaka Ishii  giving us valuable commnets and a detailed explanation regarding holomorphic dynamics on $\mathbb{C}^2$ and higher-dimensional affine spaces.
This work was supported by a JST CREST JPMJCR1913, Japan, and JST ACTX JPMJAX2004, Japan.

\bibliographystyle{plain}   
\bibliography{reference}

\end{document}